\def\R{\mathbb{R}}
\def\cA{\mathcal{A}}
\def\cI{\mathcal{I}}
\def\cM{\mathcal{M}}
\def\cS{\mathcal{S}}
\def\cT{\mathcal{T}}
\def\a{\alpha}
\def\b{\beta}
\def\g{\gamma}
\def\d{\delta}
\def\k{\kappa}
\def\l{\lambda}
\def\s{\sigma}
\def\p{\partial}
\def\veps{\varepsilon}
\def\G{\Gamma}
\def\wto{\rightharpoonup}
\def\transp{{\sf T}}
\def\hy{\widehat{y}}
\newcommand{\dv}[1]{\,{\mathrm d}#1}
\newcommand{\wcheck}[1]{#1\hspace{-.8ex}\mbox{\huge {\lower.45ex \hbox{$\textstyle \check{}$}}} \hspace{.5ex}}
\let\oldmarginpar\marginpar
\renewcommand\marginpar[1]{
  \oldmarginpar[\raggedleft\footnotesize #1]
  {\raggedright\footnotesize #1}}
\newtheorem{definition}{Definition}
\newtheorem{lemma}[definition]{Lemma}
\newtheorem{proposition}[definition]{Proposition}
\newtheorem{corollary}[definition]{Corollary}
\newtheorem{remark}[definition]{Remark}
\newtheorem{example}[definition]{Example}
\newtheorem{algorithm}[definition]{Algorithm}
\numberwithin{definition}{section}
\def\oQ{\overline{Q}}
\def\ta{\widetilde{a}}
\def\tb{\widetilde{b}}
\def\talpha{\widetilde{\a}}
\def\tbeta{\widetilde{\b}}
\def\ha{\widehat{a}}
\def\hb{\widehat{b}}
\def\ts{\widetilde{s}}
\def\bc{{\rm bc}}
\def\frame{{\rm frame}}
\def\inv{{\rm inv}}
\begin{document}
\title{Numerical simulation of inextensible elastic ribbons}
\author{S\"oren Bartels}
\address{Abteilung f\"ur Angewandte Mathematik,  
Albert-Ludwigs-Universit\"at Freiburg, Hermann-Herder-Str.~10, 
79104 Freiburg i.~Br., Germany}
\email{bartels@mathematik.uni-freiburg.de}
\date{\today}
\renewcommand{\subjclassname}{%
\textup{2010} Mathematics Subject Classification}
\subjclass[2010]{65N12 65N30 74B20, 74K20}
\begin{abstract}
Using dimensionally reduced models for the numerical simulation 
of thin objects is highly attractive as this reduces the computational
work substantially. The case of narrow thin elastic bands is considered 
and a convergent finite element discretization for the one-dimensional
energy functional together with a fully practical, energy-monotone 
iterative method for computing stationary configurations are devised. 
Numerical experiments confirm the theoretical findings and illustrate
the qualitative behavior of elastic narrow bands.
\end{abstract}
 
\keywords{Elastic ribbons, finite element method, iterative solution, convergence}

\maketitle
 
\section{Introduction}
Thin elastic objects provide a fascinating range of applications, particularly
in nanotechnology, where small lightweight constructions can undergo large
deformations that are controlled by external stimuli, cf., e.g.,~\cite{SZTDI12}. 
An important class of
these objects are elastic ribbons which are thin elastic bands of small width.
A characteristic feature is that these bands are unshearable and hence curvature
can only occur orthogonal to the strip's inplane direction.  A particular 
configuration that has been addressed in the literature are M\"obius strips of
small width, cf.~\cite{Wund62,StaHei07,BarHor15}.

To effectively simulate the highly nonlinear behavior of thin elastic objects
dimensionally reduced mathematical descriptions are necessary. In the case of
ribbons a one-dimensional model has been proposed by Sadowsky in~\cite{Sado30a,Sado30b}
which was obtained via formal arguments; further justifications have been given
in~\cite{Wund62,KirFri15}. A complete, mathematically rigorous derivation 
from three-dimensional hyperelasticity has been carried out in~\cite{FHMP16}, which 
shows that a correction of Sadowsky's functional is necessary. The correction
concerns the important case of vanishing curvature and thereby provides a more
general description. Additionally, this eliminates a singularity in 
Sadowsky's functional which is particularly important for the stable
discretization and iterative solution. The nonsmooth character of the
corrected functional underlines the expected feature that the behavior of ribbons is 
substantially more complicated than that of elastic rods, cf.~\cite{MorMul03} 
for a corresponding rigorously derived mathematical model. Our numerical experiments
confirm that the correction in the one-dimensional functional is relevant.
A comparison of certain instabilities of twisted rods and ribbons is provided in~\cite{AudSef15}.
The presence of such effects supports the importance of developing stable numerical 
approximation schemes. 

The model identified in~\cite{FHMP16} describes the deformation of an elastic ribbon
of length $L>0$ via a frame $(y,b,d)$ consisting of functions 
\[
y,b,d : (0,L) \to \R^3,
\]
defined on the centerline $(0,L)$ of the undeformed ribbons. The frame condition
requires the functions to satisfy
\[
[y',b,d] \in SO(3)
\]
pointwise in $(0,L)$ with the set of proper orthogonal matrices~$SO(3)$. 
In particular, $y$ provides an arclength parametrization of 
the deformed centerline. The arclength property $|y'|^2=1$ models the physical
feature that thin elastic objects are inextensible in the bending regime. 
The directors~$b$ and~$d$ define the deformation of the band orthogonal to the 
centerline. Since the band cannot be sheared, the in-plane curvature 
vanishes, i.e., the additional condition
\[
y'' \cdot b = 0
\]
occurs, which in view of the relation $y'\cdot b=0$ is equivalent to the
condition $y'\cdot b'=0$. A frame $(y,b,d)$ describing the deformation of an
elastic band for given boundary conditions minimizes the dimensionally reduced
elastic energy
\[
E[y,b,d] = \int_0^L \oQ(y''\cdot d,b'\cdot d) \dv{x}.
\]
Appropriately scaled body forces can be included in the minimization problem. 
Because of the frame condition and the constraint $y''\cdot b = 0$,
the functional can be expressed in terms of the curvature $\k$ and torsion $\tau$
of the curve parametrized by~$y$ and described by the frame $(y,b,d)$. These
quantities are given by 
\[
\k^2 = (y''\cdot d)^2 = |y''|^2, \quad \tau^2 = (b'\cdot d)^2 = |b'|^2
\]
and we may consider the equivalent functional
\[
E[y,b] = \int_0^L \oQ(|y''|,|b'|) \dv{x}.
\]
The omitted director~$d$ can be reconstructed via $d=y'\times b$. 

Various aspects of the outlined variational problem make the numerical 
approximation of minimizers difficult. First, the function $\oQ$ is convex and
continuously differentiable but not twice continuously differentiable.
Hence, to apply iterative solution techniques,
regularizations are required. Second, the nonlinear pointwise constraints
have to be discretized appropriately and incorporated in the solution 
strategy. Third, the nonlinear dependence of the integrand on highest
derivatives complicates the analysis of quadrature effects. Fourth, the
iterative minimization has to be carefully done to reliably decrease the
energy and avoid the occurrence of irregular stationary configurations. 

We devise a discretization of $E$ that uses piecewise cubic, $C^1$ conforming
finite elements for the approximation of $y$, and piecewise linear, 
$C^0$ conforming finite elements for $b$. The variable $d$ is eliminated
via the relation $d=y'\times b$. A twice continuously differentiable 
regularization $\oQ_\d$ of
$\oQ$ is used and the orthogonality relations $y'\cdot b = 0$ and
$y'\cdot b'=0$ are incorporated using penalization, i.e., we consider
\[\begin{split}
E_{\d,\veps,h}[y_h,b_h] = \int_0^L & \oQ_\d(|y_h''|,|b_h'|) \dv{x} \\
&+ \frac{1}{2\veps_1} \int_0^L (y_h'\cdot b_h)^2 \dv{x}
+ \frac{1}{2\veps_2} \int_0^L (y_h'\cdot b_h')^2 \dv{x}.
\end{split}\]
The unit length conditions $|y'|^2=1$ and $|b|^2=1$ are imposed at
the nodes of the partitioning defining the finite element spaces, i.e.,
\[
\cI_h^{1,0} |y_h'|^2 = \cI_h^{1,0} |b_h|^2 = 1
\]
with the nodal interpolation operator $\cI_h^{1,0}$ related to piecewise
linear, continuous finite element functions. We derive three convergence 
results for the discretized functional and its minimizers. Under an
approximability condition on minimizers of the continuous problem we
show that these are correctly approximated by discrete minimizers. 
{Assuming the density of smooth ribbon frames or a regularity property of a 
minimizer} we are able to verify
the convergence of the discrete fucntionals in the sense of $\G$-convergence
to the corrected Sadowsky functional. 
Most generally, we deduce that the functionals $E_{\d,\veps,h}$ are
$\G$-convergent to the corrected Sadowsky functional with a penalized
treatment of the orthogonality relation $y'\cdot b' =0$, i.e., for a fixed
parameter $\veps_2$.  A generalization of the results based on Jensen's 
inequality includes the use
of quadrature in nonquadratic terms which makes the devised method
fully practical even in the case of minimial regularity properties. 
{Justifications of the assumed density and regularity results are currently
under investigation~\cite{FHMP-in-prep-pre}.}

The discretized elastic energies $E_{\d,\veps,h}$ are particularly suitable
for the minimization using gradient flows, i.e., a discrete version of the
formal evolution equations
\[
\p_t y = -\nabla_y E[y,b] + (\l y')', \quad \p_t b = - \nabla_b E[y,b] - \mu b,
\]
for given initial and boundary data and suitable definitions of the gradients with
respect to the variables $y$ and $b$. The Lagrange multipliers $\l$ and $\mu$
correspond to the pointwise unit length constraints. To define a discrete
variant we use a splitting of the (regularized) energy density $\oQ_\d$ into
quadratic and nonlinear parts, i.e., we have
\[
\oQ_\d(\a,\b) = \frac12 \a^2 + \frac{5}{2} \b^2 + \psi_\d(\a^2,\b^2).
\]
With
backward difference quotient operator~$d_t$ to approximate the time derivatives 
and suitable implicit and explicit treatments of variations of the
discrete energy $E_{\d,\veps,h}$, i.e., the time stepping scheme reads
\[\begin{split}
 (d_t y_k,w)_\star + &(y_k'',w'') +  \veps_1^{-1} (y_k'\cdot b_{k-1},w'\cdot b_{k-1})
+ \veps_2^{-1} (y_k'\cdot b_{k-1}',w'\cdot b_{k-1}') \\ 
& = - (\psi_{\d,1}'(|y_{k-1}''|^2,|b_{k-1}'|^2) y_{k-1}'',w''), \\
 (d_t b_k,r)_\dagger + & 5 (b_k',r') +  \veps_1^{-1} (y_k'\cdot b_k,y_k'\cdot r)
+ \veps_2^{-1} (y_k'\cdot b_k',y_k'\cdot r') \\
& = - (\psi_{\d,2}'(|y_{k-1}''|^2,|b_{k-1}'|^2) b_{k-1}',r'),
\end{split}\]
subject to linearized pointwise constraints 
\[
d_t y_k' \cdot y_{k-1}' = 0, \quad d_t b_k \cdot b_{k-1} = 0.
\]
The equations are linear and decoupled in the variables~$y_k$ and~$b_k$.
For this decoupling the penalized approximation of the orthogonality relations
using separately convex functionals is essential. We show that the
resulting discrete time-stepping scheme is energy decreasing and 
that the violation of the constraints is controlled uniformly.

An alternative approach to the numerical simulation of elastic ribbons
is obtained by discretizing the two-dimensional nonlinear plate bending model 
from~\cite{FrJaMu02} and the finite element method devised in~\cite{Bart13,Bart15-book}. 
This has the disadvantage of discretizing a
two-dimensional domain but the advantages that the model leads to
the constrained minimization of a quadratic functional and incorporates
the width parameter which may be relevant in certain applications. However,
the explicit occurrence of the width requires the use of very 
fine discretizations.

The outline of this article is as follows. We specify employed notation and
elementary facts about finite element methods in Section~\ref{sec:prelim}.
The continuous variational problem and its regularized approximation
are addressed in Section~\ref{sec:reduced}. In Section~\ref{sec:fem} we
define a discrete approximation of the regularized variational problem
and investigate its convergence to the continuous one. A discrete 
gradient flow is devised and analyzed in Section~\ref{sec:iterative}.
Numerical experiments are reported in Section~\ref{sec:num_ex}.

\section{Preliminaries}\label{sec:prelim}
\subsection{Finite element spaces}
We always let $\cT_h$ denote a partitioning $x_0<x_1<\dots<x_N$ 
of the interval $(0,L)$ into elements $T_j = [x_{j-1},x_j]$, $j=1,2,\dots,N$,
with  maximal mesh-size $h=\max_{j=1,\dots,N} h_{T_j}$ with 
$h_{T_j}= |x_j-x_{j-1}|$. Sets of piecewise polynomial functions of 
degree at most $p$ 
and which are $k$~times continuously differentiable are denoted
by the sets
\[
\cS^{p,k}(\cT_h) = 
\big\{v_h \in C^k(0,L): v_h|_T \in P_p(T) \text{ for all } T\in \cT_h\big\}.
\]
Associated with these spaces is a nodal interpolation operator
\[
\cI_h^{p,k}: C^k(0,L)\to \cS^{p,k}(\cT_h).
\]
We work with the spaces $\cS^{1,0}(\cT_h)$ of piecewise linear, globally 
continuous functions and $\cS^{3,1}(\cT_h)$ of piecewise cubic, globally 
continuously differentiable functions for which the interpolation 
operators are entirely determined by the conditions 
\[
\cI_h^{1,0}v(x_j) = v(x_j)
\]
and 
\[
\cI_h^{3,1}w (x_j) = w(x_j), \quad \big(\cI_h^{3,1}w \big)'(x_j) = w'(x_j)
\]
for $v\in C^0([0,L])$ and $w\in C^1([0,L])$ and
$j=0,1,\dots,N$, respectively. The operators satisfy the
estimates
\[
\| v- \cI_h^{1,0} v\|_{H^r(0,L)}  \le c_{1,0} h^{2-r} \|v''\|,
\]
for $r=0,1$, and
\[
\| w- \cI_h^{3,1} w\|_{H^r(0,L)}  \le c_{3,1}h^{3-r} \|v^{(3)}\|,
\]
for $r=1,2$, respectively. Moreover, 
$\|(\cI_h^{3,1} w)'''\|_{L^2(T)}  \le c_{3,1}' \|w^{(3)}\|_{L^2(T)}$ 
for all $T\in \cT_h$ and $w\in H^3(T)$. 
We also note the inverse estimate
\begin{equation}\label{eq:inv_est}
\|v_h\|_{L^\infty(T)} \le c_\inv h_T^{-1/2} \|v_h\|_{L^2(T)}
\end{equation}
for all $T\in \cT_h$ and $v_h\in P_p(T)$ with a constant $c_\inv$ 
depending on the polynomial degree~$p$. We will always assume that
the partitioning $\cT_h$ is quasiuniform so that the minimal mesh-size
is comparable to the maximal mesh-size, i.e., $\min_{T\in\cT_h} h_T
\ge c h$. 

\subsection{Difference quotients}
Given a step-size $\tau>0$ and an arbitrary sequence $(a^k)_{k=0,\dots,K}$, 
the backward 
difference quotient $d_t a^k$ is for $k=1,2,\dots,K$ defined via
\[
d_t a^k = \tau^{-1} \big(a^k-a^{k-1}\big).
\]
A binomial formula implies the identity
\[
d_t a^k \cdot a^k = \frac12 d_t |a^k|^2 + \frac{\tau}{2} |d_t a^k|^2.
\]

\subsection{Lebesgue and Sobolev spaces}
Throughout this article we use standard notation for Sobolev and Lebesgue
spaces. We abbreviate the $L^2$ norm via
\[
\|\cdot \| = \|\cdot \|_{L^2(0,L)}.
\]
Throughout
we use standard embedding results, e.g., that $H^1(0,L)$ is compactly
embedded in $C^0(0,L)$. 

\section{Reduced model and regularized approximation}\label{sec:reduced}
\subsection{Elastic ribbons}
We consider a dimensionally reduced model for narrow ribbons rigorously derived
in~\cite{FHMP16}. The one-dimensional variational problem is defined via the elastic 
energy 
\[
E[y,b] = \int_0^L \oQ(|y''|,|b'|) \dv{x}.
\]
The vectors $y'$, $b$, and $d=y'\times b$ are required to constitute an orthonormal
frame, i.e., 
\[
[y',b,d]\in SO(3)
\]
almost everywhere in $(0,L)$. This implies that $y$ and $b$ satisfy the constraints
\[
|y'|^2 = 1, \quad |b|^2 =1, \quad y'\cdot b =0.
\]
Moreover, the derivation of the model leads to the constraint that the inplane 
curvature component vanishes, i.e., 
\[
y'' \cdot b = 0,
\]
which in view of $y'\cdot b=0$ is equivalent to $y'\cdot b'=0$.
The function $\oQ$ is given by
\[
\oQ(\a,\b) = 
\begin{cases}
(\a^2 + \b^2)^2/\a^2 & \mbox{if } |\a| \ge |\b|, \\
4 \b^2 & \mbox{if } |\a| \le |\b|.
\end{cases}
\]
The following lemma provides structural properties of $\oQ$. 

\begin{lemma}[Convexity]\label{la:convex_and_diffb}
The function~$\oQ:\R_{\ge 0}^2 \to \R$ is continuously differentiable and convex so
that the mapping 
\[
(z,r) \mapsto \int_0^L \oQ(|z|,|r|) \dv{x}
\]
is weakly lower semicontinuous on $L^2(0,L;\R^3)\times L^2(0,L;\R^3)$. 
\end{lemma}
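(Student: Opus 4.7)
The plan is to first verify $C^1$ regularity by computing partial derivatives at the interface $\a=\b$, then establish convexity via a positive-semidefinite Hessian on each smooth piece glued by the $C^1$ property, and finally deduce weak lower semicontinuity by observing that the resulting integrand is a non-negative, continuous, convex integrand.

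For smoothness, I would rewrite $\oQ = f_1$ with $f_1(\a,\b) = \a^2 + 2\b^2 + \b^4/\a^2$ on $\{\a \ge \b \ge 0\}$, and $\oQ = f_2$ with $f_2(\a,\b) = 4\b^2$ on $\{0 \le \a \le \b\}$. The two pieces agree on the interface $\a = \b$ (both equal $4\b^2$), and a direct computation gives $\p_\a f_1 = 2(\a^2+\b^2)(\a^2-\b^2)/\a^3$ and $\p_\b f_1 = 4\b(\a^2+\b^2)/\a^2$, which at $\a=\b$ reduce to $0$ and $8\b$, matching $\p_\a f_2 = 0$ and $\p_\b f_2 = 8\b$. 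Together with continuity of the partials up to the origin (where both vanish), this gives $\oQ \in C^1(\R_{\ge 0}^2)$.

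For convexity, I would compute the Hessian of $f_1$ on $\{\a > 0\}$: $\p_\a^2 f_1 = 2 + 6\b^4/\a^4$, $\p_\b^2 f_1 = 4 + 12\b^2/\a^2$, $\p_\a\p_\b f_1 = -8\b^3/\a^3$. Setting $u = \b^2/\a^2$, the determinant is $(2+6u^2)(4+12u) - 64 u^3 = 8(1+u)^3 > 0$, so the Hessian of $f_1$ is positive definite; the Hessian of $f_2$ is $\mathrm{diag}(0,8)$. To pass from piecewise to global convexity of $\oQ$, I would fix arbitrary $x_0, x_1 \in \R_{\ge 0}^2$ and study $g(t) = \oQ((1-t)x_0 + t x_1)$. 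The line segment crosses the interface $\a=\b$ at finitely many points (or lies on it, where both pieces coincide and $g$ is a quadratic). On each subinterval between crossings, $g$ is $C^2$ with $g'' \ge 0$, and by $C^1$ regularity of $\oQ$ the derivative $g'$ is continuous across the crossings; hence $g'$ is non-decreasing on $[0,1]$ and $g$ is convex.

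For weak lower semicontinuity, the explicit formulas for $\p_\a \oQ$ and $\p_\b \oQ$ above are non-negative on $\R_{\ge 0}^2$, so $\oQ$ is non-decreasing in each argument. Combining monotonicity with the triangle inequality $|t z_1 + (1-t) z_2| \le t|z_1| + (1-t)|z_2|$ (and likewise for $r$) followed by convexity of $\oQ$ shows that $(z,r) \mapsto \oQ(|z|,|r|)$ is convex on $\R^3 \times \R^3$; being also continuous and non-negative, it is a normal convex integrand, so the standard theorem on convex integral functionals yields weak lower semicontinuity on $L^2(0,L;\R^3) \times L^2(0,L;\R^3)$. The main obstacle is the convexity step: the Hessian of $\oQ$ is discontinuous across $\a=\b$ and blows up as $\a \to 0$ within $\{\a \ge \b\}$, so neither a global Hessian calculation nor a naive mollification is immediate; the one-dimensional reduction combined with the $C^1$ gluing is what bridges both issues cleanly.
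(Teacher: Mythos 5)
Your proof is correct and takes essentially the same route as the paper: matching the gradients of the two branches along $\a=\b$ for the $C^1$ property, positive semidefiniteness of the Hessian on each branch (you use the determinant--trace criterion with $\det = 8(1+u)^3$, where the paper instead bounds the quadratic form directly using $q^2\le 1$), and a one-dimensional Taylor/gluing argument for global convexity; you also usefully make explicit the monotonicity of $\oQ$ in each argument, which is needed---and left implicit in the paper---to conclude that $(z,r)\mapsto \oQ(|z|,|r|)$ is convex and hence weakly lower semicontinuous. One small inaccuracy in your closing remark: on $\{\a\ge\b\}$ the Hessian of the first branch does not blow up as $\a\to 0$, since with $q=\b/\a\le 1$ all entries stay bounded (e.g.\ $\p_\a^2 f_1 = 2+6q^4\le 8$); the genuine obstruction is only the jump of the Hessian across $\a=\b$, which your one-dimensional restriction handles correctly.
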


\begin{proof}
For $\a\ge \b$ we have
\[
\nabla \oQ(\a,\b) = \begin{bmatrix} 2 \a - 2 \b^4/\a^3 \\ 4 \b + 4 \b^3/\a^2 \end{bmatrix},
\]
and for $\a\le \b$ that 
\[
\nabla \oQ(\a,\b) = \begin{bmatrix} 0 \\ 8 \b \end{bmatrix},
\]
so that~$\oQ$ is continuously differentiable. 
It is obvious that $D^2 \oQ$ is positive semi-definite if $\a\le \b$. 
Letting $q=\b/\a$ the Hessian $D^2\oQ$ is for $\a\ge \b$ given by
\[
D^2 \oQ(\a,\b) = \begin{bmatrix} 2 + 6 q^4 & - 8 q^3 \\ - 8 q^3 & 4 + 12 q^2 \end{bmatrix}.
\]
For $x=(x_1,x_2)\in \R^2$ we have, using $q^2 \le 1$,
\[\begin{split}
x^\transp D^2 \oQ(\a,\b) x 
&= 2 x_1^2 + 4 x_2^2 + q^2 ( 6 q^2 x_1^2 - 16 q x_1 x_2 + 12 x_2^2)  \\
&\ge q^2 (8 q^2 x_1^2 - 16 q x_1 x_2 + 16 x_2^2) \ge 0,
\end{split}\]
since, e.g., $16 q x_1 x_2 \le 4 q^2 x_1^2 + 16 x_2^2$. A Taylor expansion yields the convexity
property $\oQ(\talpha,\tbeta) \ge \oQ(\a,\b) + \nabla \oQ(\a,\b) \cdot ((\talpha,\tbeta)-(\a,\b))$.  
\end{proof}

\subsection{Regularization}
A smooth approximation of the function~$\oQ$ is obtained by using
a regularized modulus function, i.e., we assume that for every $\d\ge 0$ we
are given a function 
\[
|\cdot|_\d : \R \to \R_{\ge 0}
\]
that satisfies $|\cdot|_\d \in C^2(\R)$, obeys the estimates
\begin{equation}\label{eq:mod_reg}
|\cdot|_\d \ge c_0 \d, \quad |\cdot|_\d' \le c_1, \quad |\cdot|_\d'' \le c_2 \d^{-1},
\end{equation}
and approximates the modulus function 
uniformly, i.e., for all $x\in \R$ we have 
\begin{equation}\label{eq:uni_reg}
\big||x|_\d -|x|\big| \le c_{{\rm uni}} \d.
\end{equation}
A possible choice is the function $|x|_\d = (x^2+\d^2)^{1/2}$.
Motivated by the relation $2 \max\{x,y\} = x + y + |x-y|$ we define
\[
\oQ_\d(\a,\b) = 2\b^2 +  \frac12 \a^2 + \frac12 \b^2 + \frac12 |\a^2-\b^2|_\d + 
\frac{2 \b^4}{ \a^2 + \b^2 + |\a^2-\b^2|_\d},
\]
and note that $\oQ_\d$ coincides with $\oQ$ for $\d=0$. 
To prove the consistency of our discretization, that will be based on the
regularization, we note the following estimate.

\begin{lemma}[Local Lipschitz continuity]\label{la:cont_reg}
Given $\a,\b,\talpha,\tbeta \in \R$ we have 
\[\begin{split}
\big|\oQ(\a,\b) - \oQ_\d (\talpha,\tbeta)\big| 
 \le 21 \big(|\b-\tbeta||\b+\tbeta| + |\a-\talpha||\a+\talpha|+ c_{{\rm uni}} \d\big).
\end{split}\]
\end{lemma}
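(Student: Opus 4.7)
The plan is to use the triangle inequality to split the difference into a change-of-argument part and a regularization part, that is,
\[
\oQ(\a,\b)-\oQ_\d(\talpha,\tbeta) = \bigl[\oQ(\a,\b)-\oQ(\talpha,\tbeta)\bigr] + \bigl[\oQ(\talpha,\tbeta)-\oQ_\d(\talpha,\tbeta)\bigr].
\]
Since both $\oQ$ and $\oQ_\d$ depend on their arguments only through the squares, the products $|\a-\talpha||\a+\talpha|=|\a^2-\talpha^2|$ and $|\b-\tbeta||\b+\tbeta|=|\b^2-\tbeta^2|$ are the natural units of measurement, and rewriting
\[
\oQ_\d(\a,\b) = \tfrac12\a^2+\tfrac52\b^2+\tfrac12|\a^2-\b^2|_\d + \frac{2\b^4}{\a^2+\b^2+|\a^2-\b^2|_\d}
\]
reduces the whole task to estimating four term-by-term differences.

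The polynomial and modulus contributions are controlled by elementary algebra: the identity $\a^2-\talpha^2=(\a-\talpha)(\a+\talpha)$ handles the $\tfrac12\a^2$ and $\tfrac52\b^2$ pieces, while for the term $\tfrac12|\a^2-\b^2|_\d$ I combine the reverse triangle inequality $\bigl||\a^2-\b^2|-|\talpha^2-\tbeta^2|\bigr|\le|\a^2-\talpha^2|+|\b^2-\tbeta^2|$ with the uniform regularization estimate~\eqref{eq:uni_reg} to pick up an extra $\tfrac12 c_{\rm uni}\d$. The main obstacle is the rational term $g_\d(\a,\b):=2\b^4/B_\d(\a,\b)$ with $B_\d(\a,\b)=\a^2+\b^2+|\a^2-\b^2|_\d$, which I split again as $[g_0(\a,\b)-g_0(\talpha,\tbeta)] + [g_0(\talpha,\tbeta)-g_\d(\talpha,\tbeta)]$. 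For the Lipschitz part I exploit $g_0(\a,\b) = \b^4/\max\{\a^2,\b^2\} = \tilde g(\a^2,\b^2)$ with $\tilde g(s,t)=t^2/\max\{s,t\}$, and a direct computation on each of the regions $\{s\ge t\}$ and $\{s\le t\}$ gives $|\p_s\tilde g|\le 1$ and $|\p_t\tilde g|\le 2$, so that the mean value theorem produces $|g_0(\a,\b)-g_0(\talpha,\tbeta)|\le |\a^2-\talpha^2|+2|\b^2-\tbeta^2|$. For the regularization part I use
\[
g_0(\talpha,\tbeta)-g_\d(\talpha,\tbeta) = \frac{2\tbeta^4\bigl(B_\d(\talpha,\tbeta)-B_0(\talpha,\tbeta)\bigr)}{B_0(\talpha,\tbeta)\,B_\d(\talpha,\tbeta)}
\]
together with $|B_\d-B_0|\le c_{\rm uni}\d$, and control the prefactor via $B_0=2\max\{\talpha^2,\tbeta^2\}\ge 2\tbeta^2$ and $B_\d\ge\tbeta^2$, yielding $\tbeta^4/(B_0 B_\d)\le 1/2$; the degenerate case $B_0=0$ is trivial since then $\tbeta=0$ and the numerator vanishes as well.

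Collecting the four contributions produces a bound of the form $C_1|\a-\talpha||\a+\talpha|+C_2|\b-\tbeta||\b+\tbeta|+C_3 c_{\rm uni}\d$ with constants $C_1,C_2,C_3$ of order unity (explicitly $2$, $5$, and $\tfrac32$ from my computation), so the stated constant $21$ leaves considerable slack. The only genuinely delicate point is the rational term: the regularized denominator $B_\d(\talpha,\tbeta)$ may be small, but it is always bounded below by $\tbeta^2$, which is precisely the scale at which the numerator $\tbeta^4$ becomes non-negligible, so the perturbation estimate remains uniform in $(\talpha,\tbeta)$.
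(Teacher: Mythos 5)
Your proof is correct, and in its key step it takes a genuinely different route from the paper's. Both arguments isolate the rational term $2\b^4/(\a^2+\b^2+|\a^2-\b^2|_\d)$ as the only real difficulty, but the paper compares $\b^4/s^2$ with $\tbeta^4/\ts^2$ directly, mixing the change of argument and the regularization in one step (here $s^2$ is unregularized and $\ts^2$ is regularized); this forces it to write two symmetric identities, take the minimum of the resulting prefactors, and bound that minimum by $3$ via a case distinction relating $\b^2$ to $\tbeta^2+c_0\d$. You instead decouple the two effects: the argument change is handled entirely at $\d=0$, where the denominator is exactly $2\max\{\a^2,\b^2\}$ and the function $(s,t)\mapsto t^2/\max\{s,t\}$ has partial derivatives bounded by $1$ and $2$ on each of the regions $\{s\ge t\}$ and $\{s\le t\}$, while the regularization error becomes a one-line perturbation of the denominator using $|B_\d-B_0|\le c_{\rm uni}\d$ together with $B_\d\ge\tbeta^2$ and $B_0\ge 2\tbeta^2$. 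This removes the min/case-distinction machinery and yields the sharper constants $2$, $5$, $\tfrac32$ in place of the paper's $13$, $21$, $\tfrac{13}{2}$; since all three terms on the right-hand side are nonnegative, the stated bound with $21$ follows a fortiori. The one point to spell out in a written version is that $t^2/\max\{s,t\}$ is only piecewise $C^1$ (nondifferentiable on the diagonal and at the origin, where it is extended by the value $0$), so the appeal to the mean value theorem should be replaced by integrating the a.e.-defined derivative along the segment joining $(\a^2,\b^2)$ to $(\talpha^2,\tbeta^2)$, which stays in $\R_{\ge 0}^2$; this gives the same Lipschitz bound and closes the argument.
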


\begin{proof}
We define 
\[
s^2 = \a^2+\b^2+|\a^2-\b^2|, \quad
\ts^2 = \talpha^2+\tbeta^2+|\talpha^2-\tbeta^2|_\d,
\]
and note that 
\[
\oQ(\a,\b) = 2 \b^2 + \frac12 s^2 + \frac{2 \b^4}{s^2}, \quad
\oQ_\d(\talpha,\tbeta) = 2 \tbeta^2 + \frac12 \ts^2 + \frac{2 \tbeta^4}{\ts^2}.
\]
Straightforward calculations lead to 
\[
\frac{\b^4}{s^2} -\frac{\tbeta^4}{\ts^2} 
= \frac{(\ts^2-s^2)\b^4}{s^2\ts^2} + \frac{(\b^2-\tbeta^2)(\b^2+\tbeta^2)}{\ts^2},
\]
and, by exchanging the roles of the variables, in case $s^2>0$, 
\[
\frac{\tbeta^4}{\ts^2} - \frac{\b^4}{s^2}
= \frac{(s^2-\ts^2) \tbeta^4}{\ts^2 s^2} + \frac{(\tbeta^2-\b^2)(\tbeta^2+\b^2)}{s^2}.
\]
On combining the two identities, we find that
\[
\Big| \frac{\b^4}{s^2} -\frac{\tbeta^4}{\ts^2} \Big|
\le 
\big(|\ts^2-s^2|+ |\b^2-\tbeta^2| \big) 
\min \Big\{ \frac{\b^4}{s^2\ts^2} + \frac{\b^2+\tbeta^2}{\ts^2},
\frac{\tbeta^4}{s^2\ts^2} + \frac{\b^2+\tbeta^2}{s^2}\Big\}.
\]
Noting that $s^2 \ge \b^2$
and $\ts^2\ge \tbeta^2 + c_0 \d \ge \tbeta^2$, the estimate
implies that we have
\[
\Big| \frac{\b^4}{s^2} -\frac{\tbeta^4}{\ts^2} \Big|
\le \big(|\ts^2-s^2|+ |\b^2-\tbeta^2| \big) 
\min \Big\{1+2 \frac{\b^2}{\tbeta^2+c_0\d}, 1+ 2 \frac{\tbeta^2}{\b^2} \Big\}.
\]
A case distinction relating $\b^2$ and $\tbeta^2+c_0\d$
shows that the minimum on the right-hand side is always bounded by~3.
Using~\eqref{eq:uni_reg}, Lipschitz continuity of the modulus function, 
and binomial formulas, show that we have
\[
\big|s^2 -\ts^2\big| \le 2 |\a-\talpha||\a+\talpha| + 2 |\b-\tbeta||\b+\tbeta| + c_{{\rm uni}} \d.
\]
Combining the estimates implies that we have
\[\begin{split}
\big|\oQ(\a,\b) - & \oQ_\d (\talpha,\tbeta)\big| 
\le 2 |\b^2-\tbeta^2| + \frac12 |s^2-\ts^2| + 2 \Big|\frac{\b^4}{s^2}- \frac{\tbeta^4}{\ts^2}\Big| \\
&\le (2+6) |\b^2-\tbeta^2| + \big(\frac12 + 6\big) |s^2-\ts^2|  \\
& \le |\b-\tbeta||\b+\tbeta|\big(2+6+13 \big) + 13|\a-\talpha||\a+\talpha| 
+ \frac{13}{2} c_{{\rm uni}} \d.
\end{split}\]
This implies the asserted estimate. 
\end{proof}

\subsection{Approximation}
Lemma~\ref{la:cont_reg} implies the uniform convergence property
$\oQ_\d \to \oQ$. Another consequence is the strong approximability
of the variational problem defined by~$E$. More generally, we consider the
approximation by variational problems with energy functionals~$E_{\d,\veps_2}$ 
using the regularized function $\oQ_\d$
and a penalized treatment of the relation $y'\cdot b'=-y''\cdot b =0$, 
i.e., 
\[
E_{\d,\veps_2}[y,b] = \int_0^L \oQ_\d(|y''|,|b'|) \dv{x} 
+ \frac{1}{2\veps_2} \int_0^L (y'\cdot b')^2 \dv{x}.
\]
We incorporate boundary conditions via a bounded linear operator $L_\bc$,
which is assumed to depend only on the boundary values of $y$, $y'$, and $b$. 
It defines the set of admissible frames
\[\begin{split}
\cA_\frame = \big\{  (y,b) &\in H^2(0,L;\R^3)\times H^1(0,L;\R^3) : \\
& [y',b,y'\times b]\in SO(3) \ \text{a.e. in } (0,L), \  L_\bc[y,b] = \ell_\bc \big\}.
\end{split}\]
Note that the relation $y'\cdot b'=0$ is not included in the set~$\cA_\frame$. 
The variational problem under consideration then reads as follows.
\begin{equation}\label{eq:cont_prob}
\tag{$\textrm{P}_{\d,\veps_2}$}
\left\{
\begin{array}{l}
\text{Given $\veps_2,\d\ge 0$ find a minimizing pair} \\
\text{$(y,b) \in \cA_\frame$ for } E_{\d,\veps_2}[y,b].
\end{array}
\right.
\end{equation}
The original problem formally corresponds to $\d=0$ and $\veps_2=0$ when the 
penalty term is interpreted as a strict constraint. Due to the convexity of
$\oQ$ a solution exists for $\d=0$ and $\veps_2 \ge 0$. 
Our numerical scheme will be a discrete variant of the following approximation result.

\begin{proposition}[Strong approximability]\label{prop:strong_approx}
For every $(y,b) \in \cA_\frame$ with $y'\cdot b' = 0$ 
and every $\s>0$ there exists
\[
(y_\s,b_\s)\in \big[C^\infty(0,L;\R^3)\big]^2 \cap \cA_\frame
\]
with
\[
\|y-y_\s\|_{H^2(0,L)} + \|b-b_\s\|_{H^1(0,L)} 
+ \|y_\s'\cdot b_\s'\| \le \s,
\]
such that 
\[
\big| E[y,b] - E_{\d,\veps_2}[y_\s,b_\s]\big| \le
c_{{\rm sa}} \big(\s^2\veps_2^{-1} +\d+\s\big).
\]
In particular, the functionals $\big(E_{\d,\veps_2}\big)_{\d,\veps_2}$ are
$\G$-convergent to $E$ for $(\d,\veps_2)\to 0$ and to $E_{0,\veps_2}$ for
fixed $\veps_2>0$ and $\d \to 0$. 
\end{proposition}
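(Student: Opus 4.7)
The plan is to smooth the frame via its Darboux vector, correct boundary values by a localized perturbation, and control the energy defect via Lemma~\ref{la:cont_reg}. Set $d = y' \times b$ and $R = [y', b, d] \in H^1(0,L; SO(3))$. The skew matrix $\hat\omega = R^\transp R'$, encoded by its axial vector $\omega \in L^2(0,L; \R^3)$, satisfies
\[
|y''|^2 = \omega_2^2 + \omega_3^2, \quad |b'|^2 = \omega_1^2 + \omega_3^2, \quad y' \cdot b' = -\omega_3,
\]
so the hypothesis $y' \cdot b' = 0$ is equivalent to $\omega_3 \equiv 0$.

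Given $\eta > 0$, I would mollify $\omega$ componentwise to obtain $\omega_\eta \in C^\infty([0, L]; \R^3)$ with $\|\omega - \omega_\eta\|_{L^2} \to 0$ as $\eta \to 0$ and $\omega_{\eta, 3} \equiv 0$. Solving the linear ODE $R_\eta' = R_\eta \hat\omega_\eta$ with $R_\eta(0) = R(0)$ keeps $R_\eta$ in $SO(3)$ by skew-symmetry, and continuous dependence of the flow on its coefficient yields $R_\eta \to R$ in $H^1$. Setting $b_\eta = R_\eta e_2$ and $y_\eta(x) = y(0) + \int_0^x R_\eta(t) e_1 \dv t$ produces a smooth pair with $y_\eta' \cdot b_\eta' \equiv 0$ and $(y_\eta, b_\eta) \to (y, b)$ in $H^2 \times H^1$.

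Mollification typically shifts the boundary values of $y_\eta$, $y_\eta'$, $b_\eta$ at $x = L$ by $O(\|\omega - \omega_\eta\|_{L^2})$. To restore $\ell_\bc$, I would add to $\omega_\eta$ a perturbation $c\,\chi$ with $\chi \in C_c^\infty((0,L))$ a fixed scalar bump and $c \in \R^3$ chosen via the implicit function theorem (the map from $c$ to the corrected rotation at $L$ is smooth, and for $\chi$ with $\int\chi \ne 0$ a linearization computation shows its Jacobian at $c=0$ surjects onto $\mathfrak{so}(3)$; if needed, several bumps $\chi_j$ supply additional parameters); a final translation of $y_\eta$ by a constant vector fixes $y(L)$ without affecting derivatives. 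The correction has size $O(\|\omega - \omega_\eta\|_{L^2})$, and since its third component need not vanish, the corrected pair $(y_\sigma, b_\sigma)$ satisfies $\|y_\sigma' \cdot b_\sigma'\|_{L^2} = \|\omega_{\sigma,3}\|_{L^2} = O(\|\omega - \omega_\eta\|_{L^2})$. Choosing $\eta$ small delivers $\|y - y_\sigma\|_{H^2} + \|b - b_\sigma\|_{H^1} + \|y_\sigma' \cdot b_\sigma'\| \le \sigma$. The energy estimate then follows from Lemma~\ref{la:cont_reg} applied pointwise, Cauchy--Schwarz, and the uniform $L^2$-bounds on $y''$, $y_\sigma''$, $b'$, $b_\sigma'$:
\[
\int_0^L \big|\oQ(|y''|, |b'|) - \oQ_\delta(|y_\sigma''|, |b_\sigma'|)\big| \dv x \le C(\sigma + \delta),
\]
plus the penalty contribution $\sigma^2/(2\veps_2)$.

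For the $\Gamma$-convergence statement, the $\limsup$ inequality is precisely the strong approximability, applied with $\sigma = \sigma(\delta, \veps_2) \to 0$ chosen so that $\sigma^2/\veps_2 \to 0$ (e.g., $\sigma = o(\veps_2^{1/2})$); for $y' \cdot b' \ne 0$ the target value is $+\infty$ and any constant sequence works. The $\liminf$ inequality combines the pointwise lower bound $\oQ_\delta \ge \oQ - C\delta$ (another consequence of Lemma~\ref{la:cont_reg}) with weak lower semicontinuity of $\int \oQ(|\cdot|, |\cdot|)$ from Lemma~\ref{la:convex_and_diffb} and Fatou's lemma on the nonnegative penalty term, which for $\veps_2 \to 0$ forces $y' \cdot b' = 0$ in the limit and which persists unchanged for fixed $\veps_2 > 0$. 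The main obstacle is the boundary correction: it unavoidably perturbs the third Darboux component, which precludes an exact enforcement of $y' \cdot b' = 0$ in the smoothed frame---precisely the reason why the penalty term appears in $E_{\delta,\veps_2}$ and why $\|y_\sigma' \cdot b_\sigma'\| \le \sigma$ is the best attainable bound.
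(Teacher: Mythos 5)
Your proposal is correct in substance and is in fact more self-contained than the paper's own proof: the paper does not construct $(y_\s,b_\s)$ at all, but delegates the ``mollification and correction techniques'' to a companion reference and then only records the energy-defect estimate via Lemma~\ref{la:cont_reg} together with the identity $y_\s'\cdot b_\s'=(y_\s-y)'\cdot b_\s'+y'\cdot(b_\s-b)'$, which uses the hypothesis $y'\cdot b'=0$ to bound the penalty term by the $H^2\times H^1$ distance. You instead build the approximation explicitly through the axial vector $\omega$ of $R^\transp R'$: mollifying $\omega$ and re-integrating the frame ODE automatically preserves $SO(3)$ and the relation $y'\cdot b'=-\omega_3$, which is the natural way to keep the unshearability constraint almost exact, and your intrinsic bound $\|y_\s'\cdot b_\s'\|=O(\|\omega-\omega_\eta\|_{L^2})$ is an acceptable substitute for the paper's bilinear identity; the energy estimate is then handled exactly as in the paper (Lemma~\ref{la:cont_reg}, Cauchy--Schwarz, uniform $L^2$ bounds, plus $\s^2/(2\veps_2)$ from the penalty). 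Two caveats, neither fatal: (i) the ``final translation'' restoring $y_\eta(L)$ destroys $y_\eta(0)=y(0)$ whenever both endpoint positions are prescribed, so in general you must fall back on your multi-bump implicit-function-theorem correction to match the six conditions on $R(L)$ and $\int_0^L R\,e_1$ simultaneously, and the surjectivity of the corresponding linearization deserves an argument rather than an aside; (ii) in the liminf part, ``Fatou's lemma'' should be replaced by weak lower semicontinuity of the $L^2$ norm of $y_h'\cdot b_h'$, using that $y_h'\to y'$ uniformly while $b_h'\wto b'$ only weakly.
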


\begin{proof}
A regularized frame with the asserted properties 
can be constructed using mollification and correction 
techniques,~cf.~\cite{BarRei19-in-prep-pre}.
Lemma~\ref{la:cont_reg} and the identity 
$y_\s'\cdot b_\s' = (y_\s-y)'\cdot b_\s'+ y'\cdot (b_\s-b)'$ 
then imply the approximation result. The $\G$-convergence results
are immediate consequences of the estimates. 
\end{proof}

\begin{remark}\label{rem:explode}
We remark that the quantity
\[
[y,b]_\s = \|y_\s\|_{H^3(0,L)} + \|b_\s\|_{H^2(0,L)} 
\]
is in general unbounded as $\s\to 0$. 
If $(y,b) \in H^3(0,L;\R^3)\times H^2(0,L;\R^3)$ then it remains bounded. 
\end{remark}

\section{Finite element approximation}\label{sec:fem}
We write the regularized minimization problem with a penalized treatment of
the orthogonality relations as 
\[\begin{split}
E_{\d,\veps}[y,b] &= \int_0^L \oQ_\d(|y''|,|b'|) \dv{x} + P_\veps[y,b] \\
&= \frac12 \int_0^L |y''|^2 + 5 |b'|^2 + \psi_\d(|y''|^2,|b'|^2) \dv{x} + P_\veps[y,b],
\end{split}\]
where for $\veps = (\veps_1,\veps_2)$ we define
\[
P_\veps[y,b] = \frac{1}{2\veps_1} \int_0^L (y'\cdot b)^2 \dv{x}
+ \frac{1}{2\veps_2} \int_0^L (y'\cdot b')^2 \dv{x},
\]
and the function $\psi_\d$ is defined via
\[
\psi_\d(\a^2,\b^2) = |\a^2-\b^2|_\d + \frac{4 \b^4}{\a^2+\b^2 + |\a^2-\b^2|_\d}.
\]
The minimization of $E_{\d,\veps}$ is subject to the pointwise unit length conditions 
\[
|y'|^2 = 1, \quad |b|^2 = 1.
\]
The corresponding discrete problem uses the conforming subspaces
\[\begin{split}
V_h & = \cS^{3,1}(\cT_h)^3 \times \cS^{1,0}(\cT_h)^3 \\
&\subset V = H^2(0,L;\R^3)\times H^1(0,L;\R^3).
\end{split}\]
By imposing the  nonlinear constraints at the nodes of the partitioning $\cT_h$ we
arrive at the following discrete problem. 
\begin{equation}\label{eq:discr_prob}
\tag{$\textrm{P}_{\d,\veps,h}$}
\left\{ \begin{array}{l}
\text{Find a minimizing pair $(y_h,b_h) \in V_h$ for} \\[1.5mm]
\displaystyle{ 
E_{\d,\veps,h}[y_h,b_h] 
= \frac12 \int_0^L |y_h''|^2 + 5 |b_h'|^2 + \psi_\d(|y''|^2,|b'|^2) \dv{x}} \\
\displaystyle{ \hspace*{3.5cm} + P_\veps[y_h,b_h]} \\[3mm]
\text{subject to} 
\displaystyle{\quad \cI_h^{1,0} |y_h'|^2 = \cI_h^{1,0} |b_h|^2 = 1 \text{ and } L_\bc[y_h,b_h] = \ell_\bc.}
\end{array}\right.
\end{equation}
The following proposition implies that discrete minimizers converge to
minimizers of the continuous problem provided that these satisfy an
approximability condition. Because of the assumption that the 
operator $L_\bc$ only depends on the boundary values of $y$, $y'$, and $b$,
these are preserved
under the nodal interopolation operators $\cI_h^{3,1}$ and $\cI_h^{1,0}$. 

\begin{proposition}[Partial $\G$-convergence]\label{prop:gamma_conv}
The discrete 
constrained energy functionals $E_{\veps,\d,h}$ approximate the 
continuous constrained energy functional $E$ in the sense of the following
assertions. \\
(i) Let $(y,b)\in V$. If $(\s,\veps_1,\veps_2,\d) \to 0$ as $h\to 0$
with $\s>0$ as in Proposition~\ref{prop:strong_approx} such that
\[
h^2 \veps_1^{-1} + \s^2 \veps_2^{-1} + h^2 [y,b]_\s^2 \veps_2^{-1} \to 0
\]
then there exists a sequence
of discrete admissible pairs $(y_h,b_h) \in V_h$ with $y_h \to y$
in $H^2$ and $b_h\to b$ in $H^1$ as $h\to 0$ and such that
\[
E_{\d,\veps,h}[y_h,b_h] \to E[y,b].
\]
(ii) For every bounded sequence $(y_h,b_h)\in V_h$ of admissible 
pairs with weak limit $(y,b) \in V$ we have that the limitting pair
is admissible and that 
\[
E[y,b] \le \liminf_{(\veps,\d,h)\to 0} E_{\d,\veps,h}[y_h,b_h].
\]
\end{proposition}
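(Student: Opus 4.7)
My plan is to prove (i) via a recovery sequence built from nodal interpolants of the smooth frame supplied by Proposition~\ref{prop:strong_approx}, and to prove (ii) by first establishing admissibility of the weak limit and then combining weak lower semicontinuity (Lemma~\ref{la:convex_and_diffb}) with the uniform estimate $\oQ_\d \ge \oQ - C\d$ that follows from Lemma~\ref{la:cont_reg}.

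For part (i), given $(y,b)\in V$ I would take $(y_\s,b_\s)$ from Proposition~\ref{prop:strong_approx} and set $y_h = \cI_h^{3,1} y_\s$, $b_h = \cI_h^{1,0} b_\s$. The Hermite interpolant $\cI_h^{3,1}$ matches both nodal values and derivatives, so $|y_h'(x_j)|^2 = |y_\s'(x_j)|^2 = 1$; likewise $|b_h(x_j)|^2=1$; and the boundary condition is preserved since $L_\bc$ depends only on traces. The interpolation estimates give $\|y_h-y_\s\|_{H^2}+\|b_h-b_\s\|_{H^1}\le Ch[y,b]_\s$, and therefore $(y_h,b_h)\to(y,b)$ in $H^2\times H^1$ once $h[y,b]_\s\to 0$, which follows from $h^2[y,b]_\s^2\veps_2^{-1}\to 0$ together with $\veps_2\to 0$. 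For the energy convergence, two applications of Lemma~\ref{la:cont_reg}, first with $(\a,\b)=(|y_\s''|,|b_\s'|)$, $(\talpha,\tbeta)=(|y_h''|,|b_h'|)$ followed by Cauchy-Schwarz, then with $\d=0$ comparing $(|y''|,|b'|)$ to $(|y_\s''|,|b_\s'|)$, yield
\[
\Big|E[y,b]-\int_0^L \oQ_\d(|y_h''|,|b_h'|) \dv{x}\Big| \le C\big(h[y,b]_\s + \d + \s\big).
\]
The penalty terms are treated via the frame identities $y_\s'\cdot b_\s=0$ and $\|y_\s'\cdot b_\s'\|\le \s$: splitting
\[
y_h'\cdot b_h = (y_h-y_\s)'\cdot b_h + y_\s'\cdot(b_h-b_\s)
\]
and similarly for $y_h'\cdot b_h'$, and invoking the $L^\infty$ bounds $|y_\s'|=1$ and $|b_h|\le 1$, one obtains $\veps_1^{-1}\|y_h'\cdot b_h\|^2\le Ch^4[y,b]_\s^2\veps_1^{-1}$ and an analogous bound of order $\veps_2^{-1}(h^2[y,b]_\s^2+\s^2)$ for the second penalty; all such terms vanish under the scaling hypothesis.

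For part (ii), admissibility of the weak limit follows from compact embeddings. In one dimension $H^2\hookrightarrow C^1$ and $H^1\hookrightarrow C^0$ compactly, so along a subsequence $y_h'\to y'$ and $b_h\to b$ uniformly; combined with the nodal identities $|y_h'(x_j)|^2=|b_h(x_j)|^2=1$ and density of nodes, this forces $|y'|^2=|b|^2=1$ pointwise. Boundedness of $E_{\d,\veps,h}[y_h,b_h]$ gives $\|y_h'\cdot b_h\|^2\le C\veps_1\to 0$ and $\|y_h'\cdot b_h'\|^2\le C\veps_2\to 0$; together with the uniform convergence of $y_h'$, the strong $L^2$ convergence of $b_h$, and the weak $L^2$ convergence of $b_h'$, this yields $y'\cdot b = y'\cdot b' = 0$ a.e.~Continuity of $L_\bc$ on traces preserves the boundary condition. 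For the liminf inequality, Lemma~\ref{la:cont_reg} with $(\talpha,\tbeta)=(\a,\b)$ reduces pointwise to $\oQ_\d\ge\oQ-21c_{{\rm uni}}\d$, so
\[
\int_0^L \oQ_\d(|y_h''|,|b_h'|)\dv{x} \ge \int_0^L \oQ(|y_h''|,|b_h'|)\dv{x} - CL\d.
\]
Then Lemma~\ref{la:convex_and_diffb} applied to the weakly convergent pair $(y_h'',b_h')$ gives $\liminf\int_0^L \oQ(|y_h''|,|b_h'|)\dv{x}\ge E[y,b]$, and the nonnegativity of $P_\veps$ completes the estimate.

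The main technical obstacle I anticipate is the careful balancing of scalings in part (i). The $H^1$ interpolation errors decay as $h^2[y,b]_\s$, but $[y,b]_\s$ is generally unbounded as $\s\to 0$ by Remark~\ref{rem:explode}, while simultaneously $\veps_j^{-1}\to\infty$. Making the penalty contributions vanish rests essentially on the frame identity $y_\s'\cdot b_\s=0$, which cancels the leading-order term in $y_h'\cdot b_h$ and reduces its $L^2$ norm to a product of two interpolation errors of size $h^2[y,b]_\s$, thereby gaining a full factor of $h^2$ against $\veps_1^{-1}$. Without this cancellation, or without the optimal rate provided by the $\cS^{3,1}$ Hermite elements, the first penalty contribution could not be absorbed under the stated scaling.
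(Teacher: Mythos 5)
Your proposal is correct and follows essentially the same route as the paper: the recovery sequence $y_h=\cI_h^{3,1}y_\s$, $b_h=\cI_h^{1,0}b_\s$ with the splitting $y_h'\cdot b_h=(y_h'-y_\s')\cdot b_h+y_\s'\cdot(b_h-b_\s)$ for the penalty terms and a double application of Lemma~\ref{la:cont_reg} for the integrand in part (i), and compact embeddings plus the weak lower semicontinuity of Lemma~\ref{la:convex_and_diffb} together with the uniform bound $\oQ_\d\ge\oQ-c\d$ in part (ii). The only (harmless) deviation is that your first penalty bound $h^4[y,b]_\s^2\veps_1^{-1}$ is sharper than the paper's $h^2\veps_1^{-1}$; both vanish under the stated scaling since $h^2[y,b]_\s^2=(h^2[y,b]_\s^2\veps_2^{-1})\,\veps_2\to 0$.
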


\begin{proof}
(i) We use the result of Proposition~\ref{prop:strong_approx} and consider
for $\s>0$ the smooth frame $(y_\s,b_\s)\in \cA_\frame$ that
approximates the general frame $(y,b) \in \cA_\frame$. We then
consider for $h>0$ the pair $(y_h,b_h)$ defined via
\[
y_h = \cI_h^{3,1} y_\s, \quad b_h = \cI_h^{1,0} b_\s.
\]
The pair $(y_h,b_h)$ is admissible in the discrete minimization problem
 i.e., the functions satisfy the boundary conditions and the nodal unit
length constraints. Since $y_\s'\cdot b_\s= 0$ we find that
\[
y_h' \cdot b_h = (y_h'-y_\s')\cdot b_h + y_\s' \cdot (b_h-b_\s),
\]
and hence, we have
\[
\|y_h' \cdot b_h\| \le c h.
\]
Similarly, with the quantity $c_\s= [y,b]_\s$ from Remark~\ref{rem:explode},
that bounds the norms
$\|y_\s\|_{H^3(0,L)}$ and $\|b_\s\|_{H^2(0,L)}$, we have that
\[
\|y_h'\cdot b_h'\| \le \s + c h c_\s.
\]
{The repeated application of Lemma~\ref{la:cont_reg} provides the estimate
\[\begin{split}
\big| & \oQ_\d(\a,\b) - \oQ_\d(\ta,\tb)\big| \\
&\le \big|\oQ_\d(\a,\b) - \oQ(\a,\b)\big| 
+ \big|\oQ(\a,\b) - \oQ(\ta,\tb)\big| 
+\big|\oQ_\d(\ta,\tb) - \oQ_\d(\ta,\tb)\big|  \\
&\le  c \big(|\b-\tbeta||\b+\tbeta| + |\a-\talpha||\a+\talpha| + \d\big).
\end{split}\]
In combination with Proposition~\ref{prop:strong_approx} this yields that}
\[\begin{split}
\big|E[y,b]& -E_{\d,\veps,h}[y_h,b_h] \big| \\
&\le \big|E[y,b] - E_{\d,\veps_2}[y_\s,b_\s] \big|
  + \big|E_{\d,\veps_2}[y_\s,b_\s]-E_{\d,\veps,h}[y_h,b_h]  \big| \\
&\le c \big(\s^2 \veps_2^{-1} + \d  + \s\big) 
+ c \big(h c_\s + \d + h^2 \veps_1^{-1} + (\s^2+ h^2 c_\s^2)\veps_2^{-1} \big).
\end{split}\]
The conditions of the proposition imply that the right-hand tends
to zero as $h\to 0$.  \\
(ii) Assume that $(y_h,b_h)_{h>0}$ is a sequence of admissible discrete
pairs with weak limit
$(y,b)\in V$. We may assume
that the sequence $E_{\d,\veps,h}[y_h,b_h]$ is bounded. Compactness properties
of embeddings imply that $|y'|^2= 1$ and $|b|^2 =1$
in $(0,L)$. Since $P_\veps[y_h,b_h]$ is bounded we have 
that $y_h'\cdot b_h$ and $y_h'\cdot b_h'$
converge to zero in $L^2(0,L)$. Noting that $y_h'\to y$ 
in $L^\infty(0,L;\R^3)$ it follows that $y'\cdot b=0$
and $y'\cdot b' = 0$. Since $L_\bc$ is bounded and linear it is weakly continuous
and we verify
that $L_\bc[y,b] = \ell_\bc$. Hence, we conclude that $(y,b) \in \cA_\frame$.
Since the functional~$E$ is weakly lower semicontinuous, since the functionals 
$P_\veps$ are
nonnegative, and since the regularization of the integrand $\oQ$ is uniformly 
controlled by Lemma~\ref{la:cont_reg}, we find that 
\[\begin{split}
E[y,b] & \le \liminf_{h\to 0} E[y_h,b_h] \\
&\le \liminf_{(\veps,h,\d)\to 0} E_{\d,\veps,h}[y_h,b_h] + c \d \\
&= \liminf_{(\veps,h,\d)\to 0} E_{\d,\veps,h}[y_h,b_h]. 
\end{split}\]
This proves the proposition.
\end{proof}

\begin{remark}
Let $\s=h^\a$, $\veps_j = h^{\b_j}$, $j=1,2$, and $\d = h^\g$
with $\a,\b_1,\b_2,\g>0$. Then the condition of the proposition reads 
\[
h^{2-\b_1} + h^{2\a-\b_2} + h^{2-\b_2}  [y,b]_{h^\a}^2 \to 0.
\]
Assuming that $[y,b]_\s \le c_{{\rm appr}} \s^{-p}$ for some $p\ge 0$
then this condition becomes
\[
h^{2-\b_1} + h^{2\a-\b_2} + h^{2-\b_2 -2 p \a} \to 0.
\]
Hence, for $\b_1<2$ and $\b_2$ sufficiently small the condition is 
satisfied.  If $y\in H^3(0,L;\R^3)$ and
$b\in H^2(0,L;\R^3)$ then we have $p=0$ and the condition is satisfied 
for $\b_2 < \min\{ 2,2\a\}$. 
\end{remark}

Proposition~\ref{prop:gamma_conv} implies the convergence of discrete
almost-minimizers under certain conditions. 

\begin{corollary}[Approximation of regular minimizers]\label{cor:conv_mins}
Let $(y_h,b_h)_{h>0}$ be a sequence of discrete minimizers for the 
problems~\eqref{eq:discr_prob} possibly up to 
tolerances $\veps_{{\rm tol}}$ that vanish as $h\to 0$. If 
an accumulation point $(y,b)\in V$ of the sequence is such that
there exists a sequence $\s\to 0$ such that 
\[
h^2 \veps_1^{-1} + \s^2 \veps_2^{-1} + h^2 [y,b]_\s^2 \veps_2^{-1} \to 0
\]
as $h\to 0$ then every weak accumulation point
$(y,b)\in V$ as $(\d,\veps,h)\to 0$ minimizes the constrained energy
functional~$E$.
\end{corollary}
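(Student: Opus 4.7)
The plan is to apply the fundamental theorem of $\Gamma$-convergence by splicing together the two halves of Proposition~\ref{prop:gamma_conv}. Fix a weak accumulation point $(y,b)\in V$ of the sequence of discrete almost-minimizers along some vanishing choice of parameters $(\d,\veps,h)$; after passing to a subsequence I may assume $y_h\wto y$ in $H^2$ and $b_h\wto b$ in $H^1$. Part~(ii) of Proposition~\ref{prop:gamma_conv} immediately places the limit in $\cA_\frame$, enforces $y'\cdot b'=0$, and delivers the lower bound
\[
E[y,b] \le \liminf_{(\d,\veps,h)\to 0} E_{\d,\veps,h}[y_h,b_h].
\]

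For the matching upper bound I feed the distinguished accumulation pair, together with the sequence $\s\to 0$ furnished by the corollary's hypothesis, into part~(i) of Proposition~\ref{prop:gamma_conv}. This produces admissible discrete pairs $(\tilde y_h,\tilde b_h)\in V_h$ with $(\tilde y_h,\tilde b_h)\to (y,b)$ in $H^2\times H^1$ and
\[
E_{\d,\veps,h}[\tilde y_h,\tilde b_h] \to E[y,b].
\]
The almost-minimality of $(y_h,b_h)$ up to tolerance $\veps_{\rm tol}\to 0$ then gives the comparison $E_{\d,\veps,h}[y_h,b_h]\le E_{\d,\veps,h}[\tilde y_h,\tilde b_h]+\veps_{\rm tol}$; passing to $\limsup$ yields $\limsup E_{\d,\veps,h}[y_h,b_h]\le E[y,b]$, and combined with the liminf inequality pins the discrete energies to the continuous one along the extracted subsequence.

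To promote this matching of energies into a genuine minimality of $(y,b)$ for $E$ over $\cA_\frame$, I compare with an arbitrary admissible competitor $(\hat y,\hat b)$ with $\hat y'\cdot \hat b'=0$ by running the same pipeline for $(\hat y,\hat b)$: smoothing via Proposition~\ref{prop:strong_approx} and nodal interpolation as in the proof of Proposition~\ref{prop:gamma_conv}(i) yields a discrete admissible sequence whose energies converge to $E[\hat y,\hat b]$, so almost-minimality of $(y_h,b_h)$ gives $E[y,b]\le E[\hat y,\hat b]$. The delicate step, where careful bookkeeping is required, is that the parameter scaling $h^2\veps_1^{-1}+\s^2\veps_2^{-1}+h^2[y,b]_\s^2\veps_2^{-1}\to 0$ supplied by the hypothesis on the accumulation point must be simultaneously compatible with the recovery construction for competitors; the quantitative consistency estimate in Proposition~\ref{prop:gamma_conv}(i), itself a consequence of Lemma~\ref{la:cont_reg} together with the interpolation bounds for $\cI_h^{3,1}$ and $\cI_h^{1,0}$, is what makes this possible, together with the density of smooth ribbon frames alluded to in the introduction.
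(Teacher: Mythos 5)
Your overall strategy --- equi-coercivity for boundedness, the liminf inequality from Proposition~\ref{prop:gamma_conv}(ii), a recovery sequence from part~(i), and almost-minimality to compare --- is exactly the argument the paper's one-line proof is invoking, and the first three steps are carried out correctly: feeding the accumulation point into part~(i) does yield $\limsup_{(\d,\veps,h)\to0}E_{\d,\veps,h}[y_h,b_h]\le E[y,b]$ and hence convergence of the discrete energies along the subsequence.

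The gap is in your final paragraph, and you half-see it yourself. Minimality of $(y,b)$ requires beating an arbitrary competitor $(\hat y,\hat b)\in\cA_\frame$ with $\hat y'\cdot\hat b'=0$, and for that you need a recovery sequence for $(\hat y,\hat b)$, i.e., the condition $h^2\veps_1^{-1}+\hat\s^2\veps_2^{-1}+h^2[\hat y,\hat b]_{\hat\s}^2\veps_2^{-1}\to0$ with the quantity $[\hat y,\hat b]_{\hat\s}$ of the \emph{competitor}. The corollary's hypothesis controls only $[y,b]_\s$ for the accumulation point; by Remark~\ref{rem:explode} the quantity $[\hat y,\hat b]_{\hat\s}$ may blow up at an arbitrary rate as $\hat\s\to0$, so the prescribed scaling of $(h,\veps_2)$ need not be admissible for $(\hat y,\hat b)$. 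Your appeal to ``the density of smooth ribbon frames'' is precisely the additional hypothesis of Corollary~\ref{cor:gamma_dense}, which is not assumed here and is flagged in the introduction as unresolved, so it cannot be used to close this step. The way to finish within the stated hypotheses --- and consistent with the corollary's title --- is to apply the recovery construction of part~(i) not to competitors but to a \emph{minimizer} $(y^*,b^*)$ of the constrained functional $E$, which exists by convexity of $\oQ$, reading the approximability condition as a regularity assumption on that minimizer: almost-minimality then gives $\limsup E_{\d,\veps,h}[y_h,b_h]\le E[y^*,b^*]=\min E$, and the liminf inequality of part~(ii) applied to the accumulation point yields $E[y,b]\le\min E$. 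As written, your comparison with arbitrary competitors does not close.
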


\begin{proof}
Since the functionals $E_{\d,\veps,h}$ are equi-coercive a sequence
of almost-minimizers is bounded and the result is a direct consequence
of Proposition~\ref{prop:gamma_conv}.
\end{proof}

If the regularization result of Proposition~\ref{prop:strong_approx}
can be refined such that $y_\s'\cdot b_\s'= 0$ for every $\s>0$ then
a $\G$-convergence result can be deduced. 

\begin{corollary}[$\G$-convergence to $E$]\label{cor:gamma_dense}
Assume that for every $\s>0$ there exists a pair $(y_\s,b_\s)$
as in Proposition~\ref{prop:strong_approx} with the additional property
that $y_\s'\cdot b_\s' =0$ in $(0,L)$. If $\veps_1>0$ and
$\veps_2>0$ are chosen such that
\[
h^2 \veps_1^{-1} + h^2 \veps_2^{-1} \to 0
\]
as $h\to 0$ then the constrained functionals $(E_{\d,\veps,h})$ 
are $\G$-convergent to the constrained functional~$E$.
\end{corollary}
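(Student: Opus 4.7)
The strategy is to verify the two defining conditions of $\G$-convergence with respect to weak convergence in $V$: the liminf inequality and the existence of a recovery sequence. The liminf half is already essentially Proposition~\ref{prop:gamma_conv}(ii): any sequence $(y_h,b_h)\in V_h$ of admissible discrete pairs with weak limit $(y,b)\in V$ and bounded discrete energy forces $(y,b)\in \cA_\frame$ together with $y'\cdot b'=0$ (because the $P_\veps$-contribution is bounded), and the lower semicontinuity estimate $E[y,b]\le \liminf E_{\d,\veps,h}[y_h,b_h]$ follows. So no new work is required in this direction.

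For the recovery sequence, fix $(y,b)$ with $E[y,b]<\infty$, i.e., $(y,b)\in \cA_\frame$ with $y'\cdot b'=0$. The refined hypothesis provides, for every $\s>0$, a smooth pair $(y_\s,b_\s)\in \cA_\frame$ with the further property $y_\s'\cdot b_\s'=0$ identically on $(0,L)$. I would define the candidate recovery pair by nodal interpolation, $y_h = \cI_h^{3,1}y_\s$ and $b_h = \cI_h^{1,0}b_\s$. Since $L_\bc$ depends only on boundary values of $y$, $y'$, and $b$, these are preserved under the interpolants, and at the nodes $|y_h'|^2=|y_\s'|^2=1$ and $|b_h|^2=|b_\s|^2=1$, so $(y_h,b_h)$ is discretely admissible.

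The subsequent estimates proceed as in the first half of Proposition~\ref{prop:gamma_conv}(i), but with one decisive improvement. Writing
\[
y_h'\cdot b_h' = (y_h-y_\s)'\cdot b_h' + y_\s'\cdot (b_h-b_\s)',
\]
the residual $y_\s'\cdot b_\s'$ is now identically zero, so the second penalty term is bounded by $c\, h^2 c_\s^2 \veps_2^{-1}$ with $c_\s = [y,b]_\s$, rather than by $c(\s^2+h^2 c_\s^2)\veps_2^{-1}$. Analogously, the $\s^2\veps_2^{-1}$ contribution disappears from the bound of Proposition~\ref{prop:strong_approx}. Combining with the density Lipschitz estimate of Lemma~\ref{la:cont_reg} and the usual interpolation and inverse inequalities produces an overall error of the schematic form
\[
\bigl|E[y,b]-E_{\d,\veps,h}[y_h,b_h]\bigr|
\le c\bigl(\s + \d + h c_\s + h^2 \veps_1^{-1} + h^2 c_\s^2 \veps_2^{-1}\bigr).
\]

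The final step is a diagonal choice $\s=\s(h)\to 0$. The standing hypothesis $h^2\veps_1^{-1}+h^2\veps_2^{-1}\to 0$ disposes of the two purely $\veps$-dependent terms. For the remaining ones I would choose $\s(h)\to 0$ slowly enough (depending only on the fixed function $\s\mapsto c_\s$ associated with $(y,b)$) so that $h c_{\s(h)}\to 0$ and $h^2 c_{\s(h)}^2 \veps_2^{-1}\to 0$; this is always possible because for a fixed $(y,b)$, $c_\s$ is a prescribed quantity, so one may cap it by, say, $\min\{\veps_2^{1/2}/(2h),h^{-1/2}\}$ and let $\s(h)$ decrease just fast enough to respect this cap. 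I expect the anticipated main obstacle to lie precisely in this coupling argument: without the refined density statement $y_\s'\cdot b_\s'\equiv 0$, the $\s^2\veps_2^{-1}$ residual would tie $\s$ to $\veps_2$ exactly as in Proposition~\ref{prop:gamma_conv}(i), preventing the scaling $h^2\veps_j^{-1}\to 0$ alone from sufficing. The point of the corollary is that the additional density hypothesis decouples $\s$ from $\veps_2$ and thereby upgrades the partial $\G$-convergence of Proposition~\ref{prop:gamma_conv} to a full one.
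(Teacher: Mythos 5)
Your proposal is correct and follows essentially the same route as the paper: the paper's proof simply observes that under the extra hypothesis $y_\s'\cdot b_\s'=0$ the term $\s^2\veps_2^{-1}$ drops from the condition of Proposition~\ref{prop:gamma_conv}, reducing it to $h^2\veps_1^{-1}+h^2[y,b]_\s^2\veps_2^{-1}\to 0$, which your diagonal choice of $\s(h)$ (made explicit where the paper leaves it implicit) then verifies, with equi-coercivity supplying the compactness needed for the liminf half.
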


\begin{proof}
The functionals $E_{\d,\veps,h}$ are equi-coercive and the condition
of Proposition~\ref{prop:gamma_conv} reduces to the condition 
$h^2 \veps_1^{-1} + h^2 [y,b]_\s^2 \veps_2^{-1} \to 0$ as $h\to 0$.
\end{proof}

A general approximation result can be established if 
the penalty parameter $\veps_2>0$ is fixed so that the corrected
Sadowsky functional with penalized treatment of the orthogonality
$y'\cdot b'=0$ is approximated.  

\begin{corollary}[$\G$-convergence to a penalized approximation]\label{cor:gamma_pen}
Let $\veps_2>0$ be fixed. Then the constrained functionals 
$(E_{\d,\veps,h})$ are $\G$-convergent to the constrained
functional $E_{0,\veps_2}$ as $(\d,\veps_1,h)\to 0$ provided that
$h^2 \veps_1^{-1} \to 0$.
\end{corollary}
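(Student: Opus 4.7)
The plan is to establish the two standard inequalities of $\Gamma$-convergence for $(E_{\d,\veps,h})$ relative to the weak topology of $V$, with the target being $E_{0,\veps_2}$ on the admissible set $\cA_\frame$. Since $\veps_2 > 0$ stays fixed, the penalty term $\frac{1}{2\veps_2}\|y' \cdot b'\|^2$ persists in the limit energy, so in contrast to Corollary~\ref{cor:gamma_dense} the recovery sequence is not required to satisfy $y' \cdot b' = 0$.

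For the liminf inequality I would recycle part~(ii) of Proposition~\ref{prop:gamma_conv} with essentially no change. If $(y_h, b_h) \wto (y,b)$ in $V$ with uniformly bounded $E_{\d,\veps,h}[y_h,b_h]$, the nodal unit-length constraints together with the compact embeddings $H^2 \hookrightarrow C^1$ and $H^1 \hookrightarrow C^0$ and the inverse estimate \eqref{eq:inv_est} yield $|y'|^2 = |b|^2 = 1$ a.e.; the bound on the $\veps_1$-penalty together with $\veps_1 \to 0$ forces $y' \cdot b = 0$; and $L_\bc$ is weakly continuous. Weak lower semicontinuity of $\int_0^L \oQ(|y''|,|b'|) \dv{x}$ from Lemma~\ref{la:convex_and_diffb}, the uniform control $|\oQ_\d - \oQ| \le c\d$ from Lemma~\ref{la:cont_reg}, and weak lower semicontinuity of the quadratic $\veps_2$-penalty (using that $y_h' \to y'$ strongly in $L^\infty$ while $b_h' \wto b'$ in $L^2$, so that $y_h' \cdot b_h' \wto y' \cdot b'$ in $L^2$) combine to give $E_{0,\veps_2}[y,b] \le \liminf E_{\d,\veps,h}[y_h,b_h]$.

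For the recovery sequence I would first invoke a density statement asserting that for every $(y,b) \in \cA_\frame$ there is a smooth $(y_\s, b_\s) \in [C^\infty(0,L;\R^3)]^2 \cap \cA_\frame$ with $\|y - y_\s\|_{H^2} + \|b - b_\s\|_{H^1} \le \s$; this is the analog of Proposition~\ref{prop:strong_approx} \emph{without} the auxiliary constraint $y' \cdot b' = 0$, obtained by mollification followed by projection onto $SO(3)$, as provided in \cite{BarRei19-in-prep-pre}. I would then set $y_h = \cI_h^{3,1} y_\s$ and $b_h = \cI_h^{1,0} b_\s$ with $\s = \s(h) \to 0$ chosen slowly enough that $h\, [y,b]_{\s(h)} \to 0$, which is always possible since $[y,b]_\s$ is finite for each fixed $\s>0$. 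Admissibility of $(y_h,b_h)$ in \eqref{eq:discr_prob} is preserved by the interpolation operators, and standard interpolation estimates combined with the identity $y_\s' \cdot b_\s = 0$ give $\|y_h' \cdot b_h\| \le c h$, so the $\veps_1$-term contributes $O(h^2 \veps_1^{-1}) \to 0$ by hypothesis; the strong convergences $y_h' \to y'$ in $L^\infty$ and $b_h' \to b'$ in $L^2$ imply $\|y_h' \cdot b_h'\|^2 \to \|y' \cdot b'\|^2$; and Lemma~\ref{la:cont_reg} absorbs the replacement of $\oQ_\d$ by $\oQ$ up to errors of order $\s + \d + h\, [y,b]_{\s(h)}$.

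The main obstacle I anticipate is the density statement itself: one must approximate an arbitrary $(y,b) \in \cA_\frame$ in $H^2 \times H^1$ by smooth frames that still satisfy $[y',b,y' \times b] \in SO(3)$ pointwise and retain the boundary data $L_\bc[y,b] = \ell_\bc$, but without needing $y' \cdot b' = 0$. This is materially easier than the construction behind Proposition~\ref{prop:strong_approx}, yet it is not proved within the present excerpt and relies on \cite{BarRei19-in-prep-pre}. Once granted, the remainder parallels Proposition~\ref{prop:gamma_conv} with the single simplification that no terms of the form $\s^2 \veps_2^{-1}$ or $h^2 [y,b]_\s^2 \veps_2^{-1}$ need to vanish, so the hypothesis $h^2 \veps_1^{-1} \to 0$ is the only parameter coupling that matters.
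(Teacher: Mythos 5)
Your proposal is correct and follows essentially the same route as the paper: the paper's proof of this corollary is a one-line reference to the proof of Proposition~\ref{prop:gamma_conv} together with equi-coercivity of the $E_{\d,\veps,h}$, and your argument is exactly that proof specialized to fixed $\veps_2>0$, where the terms $\s^2\veps_2^{-1}$ and $h^2[y,b]_\s^2\veps_2^{-1}$ are made to vanish by letting $\s(h)\to 0$ slowly, so that only $h^2\veps_1^{-1}\to 0$ survives as a condition. You additionally make explicit a point the paper leaves implicit, namely that the recovery sequence for a general $(y,b)\in\cA_\frame$ needs the smooth-density statement \emph{without} the side condition $y'\cdot b'=0$ and that the fixed $\veps_2$-penalty passes to the limit by continuity (strong $L^\infty$ convergence of $y_h'$ against $L^2$ convergence of $b_h'$); both observations are consistent with the paper, which defers the density construction to the cited work in preparation.
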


\begin{proof}
The result is an immediate consequence of the proof of 
Proposition~\ref{prop:gamma_conv} and the equi-coercivity
of the functionals $E_{\d,\veps,h}$.
\end{proof}

The numerical realization of the functionals $E_{\d,\veps,h}$ requires
the use of quadrature. For an element $T=[x_1,x_2]$ and a 
function $r\in C^1(T)$ we use averaging operators
\[\begin{split}
\cA_h r'|_T &= |T|^{-1} \int_T r' \dv{x} = h_T^{-1} \big(r(x_2)-r(x_1)\big), \\
\cM_h r|_T &= |T|^{-1} \int_T \cI_h^{1,0} r \dv{x} =  \frac12 \big( r(x_1) + r(x_2)\big).
\end{split}\]
We note that we have the estimates 
\[\begin{split}
\|y_h'- \cM_h y_h'\|_{L^\infty(0,L)} &\le c h^{1/2} \|y_h''\|, \\
\|\cA_h y_h'' - y_h''\| & \le c h \|y_h'''\|.
\end{split}\]
Moreover, the operator $\cA_h:L^2(0,L)\to L^2(0,L)$ 
is bounded and self-adjoint; as a consequence of Jensen's or H\"older's 
inequality we have
\begin{equation}\label{eq:jensen}
\|\cA_h y_h'' \| \le \|y_h''\|.
\end{equation}
With the help of the operators $\cA_h$ and $\cM_h$ we can define a 
discrete functional which allows for exact numerical integration and
which $\G$-converges to the continuous variational problem under slightly
more restrictive conditions on the parameters.

\begin{proposition}[Quadrature]\label{prop:quadrature}
The results of Proposition~\ref{prop:gamma_conv} and {Corollaries~\ref{cor:conv_mins},
\ref{cor:gamma_dense}, and~\ref{cor:gamma_pen}}
remain valid if $E_{\d,\veps,h}$ is replaced by the functional 
\[\begin{split}
E_{\d,\veps,h}^{{\rm quad}}[y_h,b_h] & = 
\frac12 \int_0^L |y_h''|^2 + 5 |b_h'|^2 + \psi_\d(|\cA_h y_h''|^2,|b_h'|^2) \dv{x} \\
& \qquad + \frac{1}{2\veps_1} \int_0^L \cI_h^{1,0} \big[(y_h'\cdot b_h)^2\big] \dv{x}
 + \frac{1}{2\veps_2} \int_0^L  (\cM_h y_h'\cdot b_h')^2 \dv{x},
\end{split}\]
i.e., it approximates $E$ in the sense of Proposition~\ref{prop:gamma_conv}
if the additional condition $h\veps_1^{-1} + h^{1/2} \veps_2^{-1/2}$ is 
satisfied. 
\end{proposition}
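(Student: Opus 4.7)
The strategy is to reduce to Proposition~\ref{prop:gamma_conv} and its corollaries by quantifying how $E^{\rm quad}_{\d,\veps,h}$ differs from $E_{\d,\veps,h}$ and showing that the discrepancy is negligible under the additional scaling $h\veps_1^{-1}+h^{1/2}\veps_2^{-1/2}\to 0$. The stated bounds on the averaging operators $\cA_h$ and $\cM_h$, together with Jensen's inequality~\eqref{eq:jensen}, provide the quantitative tools, and equi-coercivity is preserved since the quadratic lead terms $\tfrac12\int_0^L|y_h''|^2+5|b_h'|^2\dv{x}$ are unaffected by quadrature.

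For the $\limsup$ direction, I would use the same recovery sequence $y_h=\cI_h^{3,1}y_\s$, $b_h=\cI_h^{1,0}b_\s$ as in Proposition~\ref{prop:gamma_conv}(i) and estimate $|E^{\rm quad}_{\d,\veps,h}[y_h,b_h]-E_{\d,\veps,h}[y_h,b_h]|$ termwise. The difference between $\psi_\d(|\cA_h y_h''|^2,|b_h'|^2)$ and $\psi_\d(|y_h''|^2,|b_h'|^2)$ is controlled via Lemma~\ref{la:cont_reg} by a constant times $\|\cA_h y_h''-y_h''\|\,(\|y_h''\|+\|\cA_h y_h''\|)\le ch\|y_h'''\|\,\|y_h''\|\le c\,h\,c_\s$, invoking $\|(\cI_h^{3,1}w)'''\|\le c_{3,1}'\|w'''\|$ and the quadrature bound from Section~\ref{sec:prelim}. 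For the $\veps_1$-penalty, standard trapezoidal estimates give $|\int_0^L\cI_h^{1,0}[(y_h'\cdot b_h)^2]\dv{x}-\int_0^L(y_h'\cdot b_h)^2\dv{x}|=O(h^2)$ on bounded discrete inputs, so division by $\veps_1$ yields $O(h^2\veps_1^{-1})$, which vanishes under $h\veps_1^{-1}\to 0$. For the $\veps_2$-penalty, I would write $(\cM_h y_h'\cdot b_h')^2-(y_h'\cdot b_h')^2$ as a product of a sum and a difference; then $\|y_h'-\cM_h y_h'\|_{L^\infty}\le ch^{1/2}\|y_h''\|$ together with the bound $\|y_h'\cdot b_h'\|\le\s+chc_\s$ established in Proposition~\ref{prop:gamma_conv}(i) produces a contribution of order $h^{1/2}\veps_2^{-1/2}$ times bounded factors, vanishing under the new condition.

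For the $\liminf$ direction, let $(y_h,b_h)\wto(y,b)$ in $V$ with $E^{\rm quad}_{\d,\veps,h}[y_h,b_h]$ uniformly bounded. The quadratic lead yields uniform $H^2\times H^1$ bounds, hence strong convergences $y_h'\to y'$ and $b_h\to b$ in $C^0([0,L])$ by compact embedding. Since $\cA_h:L^2\to L^2$ is self-adjoint, bounded by~\eqref{eq:jensen}, and strongly convergent to the identity, we obtain $\cA_h y_h''\wto y''$ in $L^2$; similarly $\cM_h y_h'\to y'$ in $C^0$ via the stated $L^\infty$-estimate. The penalty terms still force the orthogonality relations in the limit: strong $C^0$ convergence of $y_h'\cdot b_h$ and stability of $\cI_h^{1,0}$ give $\int_0^L\cI_h^{1,0}[(y_h'\cdot b_h)^2]\dv{x}\to\int_0^L(y'\cdot b)^2\dv{x}$, hence $y'\cdot b=0$; mixed strong/weak convergence yields $\cM_h y_h'\cdot b_h'\wto y'\cdot b'$ in $L^2$, and weak lower semicontinuity of the $L^2$ norm gives $y'\cdot b'=0$. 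Finally, joint weak convergence $(\cA_h y_h'',b_h')\wto(y'',b')$, weak lower semicontinuity of $(z,r)\mapsto\int_0^L\oQ(|z|,|r|)\dv{x}$ from Lemma~\ref{la:convex_and_diffb}, and the uniform $O(\d)$ closeness $|\oQ-\oQ_\d|\le 21 c_{{\rm uni}}\d$ from Lemma~\ref{la:cont_reg} combine to give $\liminf\int_0^L\oQ_\d(|\cA_h y_h''|,|b_h'|)\dv{x}\ge\int_0^L\oQ(|y''|,|b'|)\dv{x}$. The corollaries then carry over verbatim with the modified scaling hypotheses.

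The main obstacle is the $\veps_2$-penalty: replacing $y_h'$ by the piecewise constant $\cM_h y_h'$ loses only $h^{1/2}$ in $L^\infty$, which is why the new hypothesis involves $h^{1/2}\veps_2^{-1/2}$ rather than the stronger $h\veps_2^{-1}$. Balancing this quadrature loss against the penalty scale while preserving both the admissibility of weak limits and the vanishing of quadrature errors is the delicate point that dictates the precise form of the additional condition.
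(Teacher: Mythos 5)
Your overall strategy coincides with the paper's: the same recovery sequence $y_h=\cI_h^{3,1}y_\s$, $b_h=\cI_h^{1,0}b_\s$, termwise control of the quadrature discrepancies via the stated bounds on $\cA_h$ and $\cM_h$, and Jensen's inequality~\eqref{eq:jensen} combined with the weak lower semicontinuity of Lemma~\ref{la:convex_and_diffb} for the lower bound. Your $\liminf$ argument is in fact a sound (and slightly more explicit) variant of the paper's: you identify $\cA_h y_h''\wto y''$ via self-adjointness and recover the admissibility of the weak limit directly from the quadrature penalties, whereas the paper first replaces the quadrature penalties by the exact ones and bounds the difference.

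One step is not justified as written: the claim that ``standard trapezoidal estimates give $O(h^2)$ on bounded discrete inputs'' for $\int_0^L \cI_h^{1,0}\big[(y_h'\cdot b_h)^2\big]-(y_h'\cdot b_h)^2\dv{x}$. A second-order interpolation estimate needs two derivatives of $(y_h'\cdot b_h)^2$, which involve $y_h'''$; for a merely $H^2\times H^1$-bounded discrete sequence an inverse estimate only gives $\|y_h'''\|\le ch^{-1}\|y_h''\|$, so the error is $O(h)$, not $O(h^2)$, and even on the recovery sequence the $O(h^2)$ constant carries the unbounded factor $c_\s=[y,b]_\s$, which the hypotheses do not control relative to $\veps_1$. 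The paper instead uses the first-order estimate $\|v-\cI_h^{1,0}v\|_{L^1(0,L)}\le ch\|v'\|_{L^1(0,L)}$, which yields $ch\veps_1^{-1}$ with constants depending only on the energy bound and matches the stated condition $h\veps_1^{-1}\to 0$; alternatively, on the recovery sequence the nodal values of $y_h'\cdot b_h$ vanish, so this quadrature penalty is identically zero there. Your conclusion survives, but only through the weaker rate. Two smaller points: the quadrature error in the $\psi_\d$-term should be estimated with the Lipschitz bound $|\nabla\psi_\d|\le c_{\psi,1}$ of Lemma~\ref{la:props_psi} (as the paper does), not with Lemma~\ref{la:cont_reg}, which compares $\oQ$ with $\oQ_\d$ rather than two evaluations of $\psi_\d$; and your heuristic for the exponent in $h^{1/2}\veps_2^{-1/2}$ needs to be made quantitative by inserting an a priori bound on $\|y_h'\cdot b_h'\|$ of order $\veps_2^{1/2}$ (or $\s+chc_\s$ on the recovery sequence) into the sum-times-difference decomposition; the paper's own displayed estimate for this term is the cruder $ch^{1/2}\veps_2^{-1}$.
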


\begin{proof}
We discuss necessary changes in the proof of Proposition~\ref{prop:gamma_conv}. \\
The penalty terms involving the operators $\cI_h^{1,0}$ and $\cM_h$
can be replaced by the penalty terms without these operators leading to 
error terms that vanish as $h\to 0$, 
i.e., for a bounded sequence $(y_h,b_h)_{h>0}$
in $H^2(0,L;\R^3)\times H^1(0,L;\R^3)$ we have for the first penalty term that 
\[\begin{split}
\Big|\frac{1}{\veps_1}\int_0^L   \cI_h^{1,0} \big[(y_h'\cdot b_h)^2\big] -  & (y_h'\cdot b_h)^2 \dv{x} \Big| 
\le \veps_1^{-1} \|(y_h'\cdot b_h)^2 - \cI_h^{1,0} (y_h' \cdot b_h)^2\|_{L^1(0,L)}  \\
&\le c h \veps_1^{-1} \|(y_h'\cdot b_h) (y_h''\cdot b_h + y_h'\cdot b_h')\|_{L^1(0,L)} \\
&\le c h \veps_1^{-1} \|y_h'\cdot b_h\| \big(\|y_h''\| \|b_h\|_{L^\infty(0,L)} 
  + \|y_h'\|_{L^\infty(0,L)} \|b_h'\|\big) \\
&\le c h \veps_1^{-1},
\end{split}\]
while for the second penalty term we have
\[\begin{split}
\Big|\frac{1}{\veps_2}\int_0^L  & (\cM_h y_h'\cdot b_h')^2 - (y_h'\cdot b_h')^2 \dv{x} \Big| \\
&\le \veps_2^{-1} \|\cM_h y_h' - y_h'\|_{L^\infty(0,L)} \|b_h'\|^2 \|\cM_h y_h' + y_h'\|_{L^\infty(0,L)}  
\le c h^{1/2} \veps_2^{-1}.
\end{split}\]
(i) For the attainment result we note that 
the Lipschitz continuity of $\psi_\d$ (see Lemma~\ref{la:props_psi} below)
yields the estimate
\[\begin{split}
\int_0^L & \Big|\psi_\d(|\cA_h y_h''|^2,|b_h'|^2) - \psi_\d(|y_h''|^2,|b_h'|^2)\Big| \dv{x} \\
&\le c_\psi \int_0^L |\cA_h y_h'' - y_h''||\cA_h y_h''+ y_h''| \dv{x}
\le c h \|y_h'''\| \le c h c_\s,
\end{split}\]
where we used that that the interpolation operator $\cI_h^{3,1}$ is elementwise 
stable in~$H^3$. Hence, we have that
\[
\big|E_{\d,\veps,h}[y_h,b_h]-E_{\d,\veps,h}^{{\rm quad}}[y_h,b_h]\big| \to 0.
\]
(ii) To establish the lower bound property we note that if
 $y_h \wto y$ in $H^2(0,L;\R^3)$ and $b_h\wto b$ in $H^1(0,L;\R^3)$
then~\eqref{eq:jensen} and the weak lower semicontinuity of $E$ identified
in Lemma~\ref{la:convex_and_diffb} show that 
\[\begin{split}
\liminf_{h,\d \to 0} \frac12 & \int_0^L |y_h''|^2 + 5 |b_h'|^2 + \psi_\d(|\cA_h y_h''|^2,|b_h'|^2) \dv{x} \\
&\ge \liminf_{h,\d \to 0} \int_0^L |\cA_h y_h''|^2 + 5 |b_h'|^2 + \psi_\d(|\cA_h y_h''|^2,|b_h'|^2) \dv{x} \\
&\ge \liminf_{h,\d \to 0} \int_0^L |\cA_h y_h''|^2 + 5 |b_h'|^2 + \psi(|\cA_h y_h''|^2,|b_h'|^2) \dv{x}  - c \d \\
&= \liminf_{h \to 0} \int_0^L \oQ (|\cA_h y_h''|,|b_h'|) \dv{x} \\
&\ge \int_0^L |y''|^2 + 5 |b'|^2 + \psi(|y''|^2,|b'|^2) \dv{x}.
\end{split}\]
This implies that $E[y,b] \le \liminf_{(\veps,\d,h)\to 0} 
E_{\d,\veps,h}^{{\rm quad}}[y_h,b_h]$ and proves the assertion. 
\end{proof}

\begin{remark}
An inspection of the proof of Proposition~\ref{prop:gamma_conv}
implies that no condition on $\veps_1$ is required if quadrature 
is used. For the recovery sequence $(y_h,b_h)$ constructed in~(i)
we have that the nodal values of $y_h'\cdot b_h$ and hence the
penalty term vanish. For a sequence $(y_h,b_h)$ in~(ii) with bounded 
energies we have $\|\cI_h[(y_h'\cdot b_h)^2]\|_{L^1(0,L)} \le c \veps$
which implies that $y'\cdot b=0$ for accumulation points. 
\end{remark}

\section{Iterative minimization}\label{sec:iterative}

\subsection{Discrete gradient flow}
We devise a decoupled gradient scheme with a linearized treatment
of the pointwise constraints to compute stationary configurations. 
For ease of readability we often omit the spatial discretization
parameter $h$ in what follows and denote
\[
Y_h = \cS^{3,1}(\cT_h)^3, \quad B_h = \cS^{1,0}(\cT_h)^3.
\]
The variational derivatives of the (discretized) penalty 
functional $P_{\veps,h}$ with respect to $y$ and $b$
are denoted by $\d_y P_\veps$ and $\d_b P_\veps$, respectively.
For the function
$\psi_\d$ the partial derivatives are denoted by $\psi_{\d,1}' $ and
$\psi_{\d,2}'$. We assume that the boundary conditions are separated,
i.e., that we have
\[
L_\bc[y,b] = \big(L_\bc^1[y],L_\bc^2[b]\big),
\]
and $\ell_\bc = (\ell_\bc^1,\ell_\bc^2)$. 

\begin{algorithm}[Gradient descent]\label{alg:grad_desc}
Choose $\tau>0$ and $(y_0,b_0)\in Y_h\times B_h$ with 
\[
L_\bc^1[y_0] = \ell_\bc^1, \quad L_\bc^2[b_0] = \ell_\bc^2
\]
and the nodal length conditions
\[
|y_0'|^2=1, \quad |b_0|^2 = 1;
\]
set $k=1$. \\
(1) Compute $y_k \in Y_h$ such that 
\[\begin{split}
 (d_t y_k,w)_\star + (y_k'',w'') +  \d_y P_\veps[y_k&,b_{k-1};w] = \\ 
& - (\psi_{\d,1}'(|\cA y_{k-1}''|^2,|b_{k-1}'|^2) \cA y_{k-1}'',\cA w'') 
\end{split}\]
for all $w\in Y_h$ subject to the nodal constraints
\[
d_t y_k'\cdot y_{k-1}' = 0, \quad w'\cdot y_{k-1}' = 0
\]
and the boundary conditions 
\[
L_\bc^1[d_t y_k] = 0, \quad L_\bc^1[w] = 0.
\]
(2) Compute  $b_k \in B_h$ such that 
\[\begin{split}
 (d_t b_k,r)_\dagger + 5 (b_k',r') + \d_b P_\veps[y_k&,b_k;r] = \\
& - (\psi_{\d,2}'(|\cA y_{k-1}''|^2,|b_{k-1}'|^2) b_{k-1}',r') 
\end{split}\]
for all $r\in B_h$ subject to the nodal constraints
\[
d_t b_k \cdot b_{k-1} = 0, \quad r \cdot b_{k-1} = 0
\]
and the boundary conditions 
\[
L_\bc^2[d_t b_k] = 0, \quad L_\bc^2[r] = 0.
\]
(3) Stop the iteration if 
$\|d_t y_k\|_\star + \|d_t b_k\|_\dagger \le \veps_{{\rm stop}}$; 
set $k\to k+1$ and continue with~(1) otherwise.
\end{algorithm}

We remark that all steps of Algorithm~\ref{alg:grad_desc} define linear
systems of equations that can be solved efficiently. 

\subsection{Linearization estimates}
The iterative numerical treatment of the model problem requires the use of
linearizations of nonlinearities occurring in the energy functional. 
The following lemmas provide estimates on derivatives of the
nonquadratic part of the regularized function~$\oQ_\d$. 

\begin{lemma}[Derivative bounds]\label{la:props_psi}
The function $\psi_\d$, for $r,s\in \R_{\ge 0}$ defined via
\[
\psi_\d(r,s) =  |r-s|_\d + \frac{4 s^2}{r+s+|r-s|_\d},
\]
satisfies
\[
|\nabla \psi_\d| \le c_{\psi,1}, \quad |D^2 \psi_\d|\le c_{\psi,2} \d^{-1}.
\]
\end{lemma}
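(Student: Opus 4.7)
The plan is a direct computation of $\nabla\psi_\d$ and $D^2\psi_\d$ in terms of the regularized modulus, together with a handful of elementary bounds that exploit the structure of the denominator. I would begin by introducing the shorthand
\[
\sigma = |r-s|_\d, \qquad v = r + s + \sigma,
\]
so that $\psi_\d(r,s) = \sigma + 4s^2/v$. The crucial elementary bounds are
\[
v \ge r \ge 0, \quad v \ge s \ge 0, \quad v \ge \sigma \ge c_0\d,
\]
which follow from $r,s\ge 0$ and the lower bound in~\eqref{eq:mod_reg}. In particular $r/v\le 1$, $s/v\le 1$, and $1/v \le (c_0\d)^{-1}$. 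I will also use $|\sigma'|\le c_1$ and $|\sigma''|\le c_2\d^{-1}$ from~\eqref{eq:mod_reg} combined with $\partial_r\sigma = \sigma'(r-s)$, $\partial_s\sigma = -\sigma'(r-s)$.

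For the first derivative bound, direct differentiation gives
\[
\partial_r\psi_\d = \sigma' - \frac{4s^2(1+\sigma')}{v^2}, \qquad
\partial_s\psi_\d = -\sigma' + \frac{8s}{v} - \frac{4s^2(1-\sigma')}{v^2}.
\]
Using $s/v\le 1$ and $|\sigma'|\le c_1$, each term is bounded by a constant depending only on $c_1$. This yields $|\nabla\psi_\d|\le c_{\psi,1}$.

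For the second derivatives I differentiate once more, which produces two kinds of terms. Terms arising from differentiating $\sigma'$ carry a factor $\sigma''$, hence are of order $\d^{-1}$, multiplied by quantities of the form $1$ or $s^2/v^2$ that are bounded by my elementary bounds. Terms arising from differentiating $v^{-2}$ produce $s^k/v^{3}$ for $k\in\{1,2\}$, and I rewrite these as
\[
\frac{s^k}{v^3} = \frac{1}{v}\cdot\frac{s^k}{v^2} \le \frac{1}{v}\le \frac{1}{c_0\d},
\]
using $s/v\le 1$. Analogous manipulations handle the mixed derivative $\partial_r\partial_s\psi_\d$ and $\partial_{ss}\psi_\d$ (where the $8s/v$ term on differentiation yields $8/v - 8s\partial_s v/v^2$, both $O(\d^{-1})$ by the same trick). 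Collecting all contributions gives $|D^2\psi_\d|\le c_{\psi,2}\d^{-1}$.

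The only mild obstacle is bookkeeping: one must track carefully that every negative power of $v$ beyond the first is paired with a power of $s$ (or $r$) at least as large, so only a single bare $1/v$ survives, matching the $\d^{-1}$ on the right. Once that is seen, no delicate cancellation is required — the claim is essentially a consequence of the homogeneity of $4s^2/v$ and of the regularization bounds~\eqref{eq:mod_reg}.
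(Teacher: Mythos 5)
Your proposal is correct and follows essentially the same route as the paper: explicit computation of the first and second partial derivatives in terms of $|\cdot|_\d'$ and $|\cdot|_\d''$, followed by elementary bounds using $s\le v$ and the regularization estimates~\eqref{eq:mod_reg}. The only cosmetic difference is that you bound $1/v\le (c_0\d)^{-1}$ directly from $v\ge |r-s|_\d\ge c_0\d$, whereas the paper invokes a case distinction relating $r+s$ to $\d$; both yield the same $\d^{-1}$ factor.
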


\begin{proof}
Let $\s_\d = |\cdot|_\d'$ and $D_\d = \s_\d'= |\cdot|_\d''$. We have
\[\begin{split}
\p_1 \psi_\d (r,s) &= \s_\d(r-s) - \frac{4s^2}{(r+s+|r-s|_\d)^2}(1+\s_\d(r-s)) \\
\p_2 \psi_\d (r,s) &= - \s_\d(r-s) + \frac{8s}{r+s+|r-s|_\d}  
- \frac{4 s^2}{(r+s+|r-s|_\d)^2}(1-\s_\d(r-s)).
\end{split}\]
The formulas imply the uniform boundedness of the partial derivatives. 
For higher order partial derivatives we have, e.g., 
\begin{multline*}
\p_1^2 \psi_\d (r,s) = D_\d(r-s) + \frac{8s^2}{(r+s+ |r-s|_\d)^3}\big(1+\s_\d(r-s)\big)^2 \\
  - \frac{4s^2}{(r+s+|r-s|_\d)^2}D_\d(r-s).
\end{multline*}
The remaining second order derivatives lead
to similar formulas. Incorporating the assumptions on the
second derivative of the regularized modulus function and using 
a case distinction relating $r+s$ to $\d$ imply the bounds 
on the Hessian of $\psi_\d$. 
\end{proof}

A Taylor formula is needed to control the explicit treatment of the nonlinear
part~$\psi_\d$ in Algorithm~\ref{alg:grad_desc}.

\begin{lemma}[Taylor formula]\label{la:taylor_psi}
For all $a,b,\ta,\tb \in \R^3$ there exist $\ha,\hb \in \R^3$ such that 
\[\begin{split}
\frac12 \big(\psi_\d(|a|^2,|b|^2) - \psi_\d(|\ta|^2,|\tb|^2)& \big) 
- \nabla \psi_\d(|\ta|^2,|\tb|^2) \cdot 
\begin{bmatrix} \ta \cdot (a-\ta)  \\ \tb \cdot (b-\tb) \end{bmatrix} \\
& =  \G(\ha,\hb,a,b,\ta,\tb),
\end{split}\]
with a function $\G$ that satisfies
\[
\big|\G(\ha,\hb,a,b,\ta,\tb)\big| \le 
c_\G \big[\d^{-1} \big(|a|^2 + |\ta|^2 + |b|^2 + |\tb|^2\big) + 1\big]
\big(|a-\ta|^2 + |b-\tb|^2\big).
\]
\end{lemma}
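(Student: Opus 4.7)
The plan is to introduce the composed function $F(a,b) := \psi_\d(|a|^2,|b|^2)$ defined on $\R^3 \times \R^3$ and apply a second-order Taylor expansion to $F$ about $(\ta,\tb)$. Using the chain rule, the first partial derivatives read
\[
\p_a F(a,b) = 2 \psi_{\d,1}'(|a|^2,|b|^2)\, a, \qquad
\p_b F(a,b) = 2 \psi_{\d,2}'(|a|^2,|b|^2)\, b,
\]
so that $\tfrac12 \nabla F(\ta,\tb) \cdot (a-\ta,b-\tb)$ equals exactly the linearization term on the left-hand side of the asserted identity. The formula then follows from the Taylor expansion with Lagrange remainder: there exist $\ha$, $\hb$ on the line segment joining $(\ta,\tb)$ and $(a,b)$ such that
\[
F(a,b) - F(\ta,\tb) - \nabla F(\ta,\tb)\cdot (a-\ta,b-\tb)
 = \tfrac{1}{2}\, (a-\ta,b-\tb)^\transp D^2 F(\ha,\hb)\, (a-\ta,b-\tb),
\]
and we define $\G(\ha,\hb,a,b,\ta,\tb)$ to be one half of this remainder.

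The remaining step is to estimate $|D^2 F|$. Differentiating once more with the chain rule yields expressions of the form
\[
\p_a^2 F = 4 \psi_{\d,11}''(|a|^2,|b|^2)\, a\otimes a + 2 \psi_{\d,1}'(|a|^2,|b|^2)\, I,
\]
with analogous formulas for $\p_b^2 F$ and the mixed derivatives $\p_a\p_b F = 4\psi_{\d,12}''(|a|^2,|b|^2)\, a \otimes b$. By Lemma~\ref{la:props_psi} we have $|\nabla \psi_\d| \le c_{\psi,1}$ and $|D^2 \psi_\d| \le c_{\psi,2} \d^{-1}$, and hence
\[
|D^2 F(\ha,\hb)| \le c\,\bigl(\d^{-1}(|\ha|^2 + |\hb|^2) + 1\bigr).
\]
Since $\ha$ and $\hb$ lie on the segment from $(\ta,\tb)$ to $(a,b)$, we bound $|\ha|^2 + |\hb|^2 \le 2(|a|^2 + |\ta|^2 + |b|^2 + |\tb|^2)$, and combining with the Lagrange remainder identity yields the asserted estimate on $\G$.

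The only delicate point is making sure the chain-rule coefficients all fit into the schematic bound $\d^{-1}(|a|^2 + |\ta|^2 + |b|^2 + |\tb|^2) + 1$; the $+1$ absorbs contributions from the first-order terms $\psi_{\d,1}'$, $\psi_{\d,2}'$ (which are uniformly bounded), while the $\d^{-1}$-part absorbs the second derivatives of $\psi_\d$ multiplied by factors $a\otimes a$, $a\otimes b$, or $b\otimes b$, whose operator norm is controlled by $|a|^2 + |b|^2$ and therefore by the quantities at $(\ha,\hb)$. No further obstacle arises, and the stated bound on $\G$ is immediate.
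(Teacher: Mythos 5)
Your proposal is correct and follows essentially the same route as the paper: a second-order Taylor expansion with Lagrange remainder of the composed function $(a,b)\mapsto \psi_\d(|a|^2,|b|^2)$ (the paper works with half of this function, which is only a cosmetic difference), a chain-rule computation of its Hessian, and the derivative bounds of Lemma~\ref{la:props_psi} combined with the observation that $(\ha,\hb)$ lies on the connecting segment. No issues.
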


\begin{proof}
Defining $g(a,b) = (1/2) \psi_\d(|a|^2,|b|^2)$ we have
\[
g(a,b) -  g(\ta,\tb) - D g(\ta,\tb) \begin{bmatrix} a-\ta \\ b-\tb \end{bmatrix}
=  \frac12 \begin{bmatrix} a-\ta \\ b-\tb \end{bmatrix}^\transp
 D^2g(\ha,\hb)  \begin{bmatrix} a-\ta \\ b-\tb \end{bmatrix},
\]
where $(\ha,\hb)$ belongs to the line segment connecting $(a,b)$ with 
$(\ta,\tb)$. We have
\[
\p_1 g(\ha,\hb) = \p_1 \psi_\d(|\ha|^2,|\hb|^2) \ha, \quad
\p_2 g(\ha,\hb) = \p_2 \psi_\d(|\ha|^2,|\hb|^2) \hb,
\]
and consequently, 
\[\begin{split}
\p_1^2 g(\ha,\hb) &= 2\p_1^2 \psi_\d(|\ha|^2,|\hb|^2) \ha \ha^\transp  + \p_1 \psi_\d(|\ha|^2,|\hb|^2) I_{3\times 3}, \\
\p_2^2 g(\ha,\hb) &= 2 \p_2^2 \psi_\d(|\ha|^2,|\hb|^2) \hb \hb^\transp + \p_2 \psi_\d(|\ha|^2,|\hb|^2) I_{3\times 3}, \\
\p_1 \p_2 g(\ha,\hb) &= 2\p_1 \p_2 \psi_\d(|\ha|^2,|\hb|^2) \hb \, \ha^\transp.
\end{split}\]
The bounds of Lemma~\ref{la:props_psi} imply the asserted result.
\end{proof}

\subsection{Energy monotonicity}
Stability of Algorithm~\ref{alg:grad_desc} can be established 
if the step size~$\tau$
satisfies a smallness condition defined by the regularization 
parameter~$\d$ and the mesh-size~$h$. The following proposition
implies that the iteration approximates stationary points for
the discrete energy functional. 

\begin{proposition}[Energy decay]\label{prop:ener_decay}
Assume that there exist $c_\star,c_\dagger >0$ such that for all
$w\in Y_h$ and $r \in B_h$ we have
\[
\|w''\| \le c_\star \|w\|_\star, \quad \|r'\| \le c_\dagger \|r\|_\dagger.
\]
Then there exists a constant $c_{{\rm ed}}>0$ such that
for the uniquely defined iterates $(y_k,b_k)_{k=0,1,\dots}$ of 
Algorithm~\ref{alg:grad_desc} we have  
\[
E_{\d,\veps,h}[y_K,b_K] 
+ \big(1-c_{{\rm ed}} \tau \d^{-1}h^{-1}\big) 
\tau \sum_{k=1}^K \big(\|d_t y_k\|_\star^2 + \|d_t b_k\|_\dagger^2 \big) 
\le E_{\d,\veps,h}[y_0,b_0].
\]
Moreover, if $2 c_{{\rm ed}} \tau \le \d h$ we have that
\[
\|\cI_h^{1,0} |b_k|^2-1\|_{L^\infty(0,L)} + \|\cI_h^{1,0}|y_k'|^2 -1\|_{L^\infty(0,L)} 
\le  c_{{\rm ul}} \tau e_0,
\]
with $e_0 = E_{\d,\veps,h}[y_0,b_0]$.
\end{proposition}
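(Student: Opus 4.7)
The plan is to test the two linear subproblems of Algorithm~\ref{alg:grad_desc} against $w = d_t y_k$ and $r = d_t b_k$ respectively. These are admissible: the linearized constraints $d_t y_k'\cdot y_{k-1}' = 0$ and $d_t b_k \cdot b_{k-1} = 0$ together with the homogeneous boundary conditions $L_\bc^1[d_t y_k]=0$, $L_\bc^2[d_t b_k]=0$ coincide with the constraints imposed on the respective test functions, and solvability of each step follows from Lax--Milgram on the constrained subspace. The binomial identity from Section~\ref{sec:prelim} converts each pairing such as $(y_k'',d_t y_k'')$ into $\frac{1}{2} d_t\|y_k''\|^2 + \frac{\tau}{2}\|d_t y_k''\|^2$; the penalty brackets are quadratic in the varied variable, so the same identity applies and the $y$-step contribution $\|y_k'\cdot b_{k-1}\|^2 - \|y_{k-1}'\cdot b_{k-1}\|^2$ together with the $b$-step contribution $\|y_k'\cdot b_k\|^2 - \|y_k'\cdot b_{k-1}\|^2$ telescope to $d_t P_\veps[y_k,b_k]$ (and analogously for the $\veps_2$-term).

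Next I would handle the explicit, non-quadratic right-hand sides via the Taylor formula of Lemma~\ref{la:taylor_psi} applied pointwise with $a = \cA y_k''$, $\ta = \cA y_{k-1}''$, $b = b_k'$, $\tb = b_{k-1}'$. Since $a - \ta = \tau \cA d_t y_k''$ and $b - \tb = \tau d_t b_k'$, dividing the Taylor identity by $\tau$ and integrating over $(0,L)$ yields
\[
(\psi_{\d,1}' \cA y_{k-1}'', \cA d_t y_k'') + (\psi_{\d,2}' b_{k-1}', d_t b_k')
= \frac{1}{2} d_t \int_0^L \psi_\d(|\cA y_k''|^2,|b_k'|^2)\dv{x} - \tau^{-1}\int_0^L \G \dv{x}.
\]
Substituting this into the sum of the two tested equations produces
\[
d_t E_{\d,\veps,h}[y_k,b_k] + \|d_t y_k\|_\star^2 + \|d_t b_k\|_\dagger^2 + (\text{nonneg.~squares}) = \tau^{-1}\int_0^L \G\dv{x}.
\]
The remaining work is to absorb $\tau^{-1}\int|\G|$ into the dissipative squares. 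Arguing inductively, I would assume $E_{\d,\veps,h}[y_{k-1},b_{k-1}]\le e_0$, use the inverse estimate~\eqref{eq:inv_est} on the piecewise polynomial functions $\cA y_{k-1}''$ and $b_{k-1}'$ to deduce $\|\cA y_{k-1}''\|_{L^\infty}^2 + \|b_{k-1}'\|_{L^\infty}^2 \le c h^{-1} e_0$, and combine with~\eqref{eq:jensen} and the hypotheses $\|w''\|\le c_\star\|w\|_\star$, $\|r'\|\le c_\dagger\|r\|_\dagger$ to conclude
\[
\tau^{-1}\int_0^L|\G|\dv{x} \le c_{\rm ed}\, \tau\d^{-1}h^{-1}\bigl(\|d_t y_k\|_\star^2 + \|d_t b_k\|_\dagger^2\bigr).
\]
Multiplying by $\tau$, summing from $k=1$ to $K$ and telescoping the energies proves the first assertion; the induction hypothesis $E_{\d,\veps,h}[y_k,b_k]\le e_0$ is preserved whenever $c_{\rm ed}\tau \le \d h$. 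The main obstacle is precisely this sharp interaction between $\d$ and $h$: the Hessian of $\psi_\d$ only scales like $\d^{-1}$ (Lemma~\ref{la:props_psi}) and the pointwise values of $y_{k-1}''$, $b_{k-1}'$ entering Lemma~\ref{la:taylor_psi} must be recovered from $L^2$ bounds via an inverse estimate, which produces the extra factor $h^{-1}$ and forces the step-size restriction.

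For the nodal unit-length violation the linearized constraint $d_t y_k'(x_j)\cdot y_{k-1}'(x_j) = 0$ combined with the binomial identity gives $|y_k'(x_j)|^2 = |y_{k-1}'(x_j)|^2 + \tau^2|d_t y_k'(x_j)|^2$; iterating this and using $|y_0'(x_j)|^2 = 1$ produces
\[
|y_k'(x_j)|^2 - 1 = \tau^2 \sum_{m=1}^k |d_t y_m'(x_j)|^2.
\]
A one-dimensional Sobolev embedding, together with the hypothesis $\|w''\|\le c_\star\|w\|_\star$ and the homogeneous boundary conditions on $d_t y_m$, controls $|d_t y_m'(x_j)|$ by $c\|d_t y_m\|_\star$; hence under $2c_{\rm ed}\tau\le\d h$ the summed dissipation estimate from the first part of the proposition yields
\[
\tau^2\sum_{m=1}^k |d_t y_m'(x_j)|^2 \le c\tau\cdot \tau\sum_{m=1}^k \|d_t y_m\|_\star^2 \le c\tau e_0,
\]
and the identical argument applied to $b_k$ delivers the bound on $\|\cI_h^{1,0}|b_k|^2-1\|_{L^\infty}$, which completes the proof.
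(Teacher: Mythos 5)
Your overall route is the paper's: test the two decoupled equations with $d_t y_k$ and $d_t b_k$, exploit the separate convexity (quadratic structure) of the penalty to telescope it into $d_t P_{\veps,h}[y_k,b_k]$, treat the explicit $\psi_\d$-terms with the Taylor formula of Lemma~\ref{la:taylor_psi}, absorb the remainder via the inverse estimate to produce the factor $\tau\d^{-1}h^{-1}$, and derive the constraint-violation bound from the binomial identity at the nodes. The final step and the penalty argument are fine.

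There is, however, a genuine gap in the absorption step. The remainder bound of Lemma~\ref{la:taylor_psi} with $a=\cA y_k''$, $\ta=\cA y_{k-1}''$, $b=b_k'$, $\tb=b_{k-1}'$ contains the factor $\d^{-1}\big(|a|^2+|\ta|^2+|b|^2+|\tb|^2\big)$, i.e.\ it involves the $L^\infty$ norms of the \emph{current} iterates $y_k''$ and $b_k'$, not only of $y_{k-1}''$ and $b_{k-1}'$. Your induction hypothesis $E_{\d,\veps,h}[y_{k-1},b_{k-1}]\le e_0$ combined with the inverse estimate~\eqref{eq:inv_est} controls only the old iterates, so the claimed bound $\tau^{-1}\int_0^L|\G|\dv{x}\le c_{\rm ed}\tau\d^{-1}h^{-1}(\|d_t y_k\|_\star^2+\|d_t b_k\|_\dagger^2)$ does not follow as written: at the moment you invoke it, $\|y_k''\|$ and $\|b_k'\|$ are still unknown. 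The paper closes this circle with a separate preliminary step: testing the same equations but estimating the right-hand sides crudely via the uniform gradient bound $|\nabla\psi_\d|\le c_{\psi,1}$ from Lemma~\ref{la:props_psi} (no Taylor expansion, no $\d^{-1}$), which yields $\tfrac12\|y_k''\|^2+\tfrac52\|b_k'\|^2+P_{\veps,h}[y_k,b_k]\le (1+\tau C_0)e_0\le 2e_0$ under the mild condition $\tau C_0\le 1$. Only with this a priori control of the current iterate in hand can the Taylor-remainder term be absorbed into the dissipation. You need to add such an intermediate estimate (or an equivalent device, e.g.\ bounding $\|y_k''\|_{L^\infty}$ by $\|y_{k-1}''\|_{L^\infty}+\tau\|d_t y_k''\|_{L^\infty}$ and re-absorbing, which complicates the constants); without it the induction does not close.
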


\begin{proof}
We adopt an inductive argument and first prove an intermediate bound which 
will allow us to control the explicit treatment of the nonlinearities related
to $\psi$. The conditional energy decay estimate then implies the control of
the violation of the pointwise constraints. Well-posedness of the iteration
is an immediate consequence of the Lax--Milgram lemma.  \\
{\em Step 1.} We choose $w=d_t y_k$ and $r= d_t b_k$ in Steps~(1) and~(2) of 
Algorithm~\ref{alg:grad_desc}. Incorporating~\eqref{eq:jensen} this implies that 
\begin{multline}\label{eq:rough_est}
\|d_t y_k\|_\star^2 + \|d_t b_k\|_\dagger^2 + 
d_t \Big(\frac12 \|y_k''\|^2 + \frac52 \|b_k'\|^2 + P_{\veps,h}[y_k,b_k] \Big) \\
\le \|\nabla \psi_\d(|\cA y_{k-1}''|^2,|b_{k-1}'|^2)\|_{L^\infty(0,L)} 
\big(\|y_{k-1}''\| \| d_t y_k''\| + \|b_{k-1}'\| \|d_t b_k''\| \big).
\end{multline}
Here, we used that the separate convexity of $P_{\veps,h}$ implies that we have
\[
P_{\veps,h}[y_k,b_{k-1}] + \d_y P_{\veps,h}[y_k,b_{k-1};y_{k-1}-y_k] 
\le P_{\veps,h}[y_{k-1},b_{k-1}]
\]
and 
\[
P_{\veps,h}[y_k,b_k] + \d_b P_{\veps,h}[y_k,b_k;b_{k-1}-b_k] 
\le P_{\veps,h}[y_k,b_{k-1}]
\]
which by summation and multiplication by $1/\tau$ yields that
\[\begin{split}
d_t P_{\veps,h}[y_k,b_k] &= \frac{1}{\tau}\big(P_{\veps,h}[y_k,b_k]-P_{\veps,h}[y_{k-1},b_{k-1}]\big) \\
&\le \d_y P_{\veps,h} [y_k,b_{k-1};d_t y_k] + \d_b P_{\veps,h}[y_k,b_k;d_t b_k].
\end{split}\]
In an inductive argument we assume that $E_{\d,\veps}[y_{k-1},b_{k-1}] \le e_0$
which yields
\[
\frac12 \|y_{k-1}''\|^2 + \frac52 \|b_{k-1}'\|^2 + P_{\veps,h}[y_{k-1},b_{k-1}] \le e_0.
\]
With~\eqref{eq:rough_est} and Lemma~\ref{la:props_psi} we find that
\[\begin{split}
\tau  \Big(\frac12 \|d_t y_k\|_\star^2 & + \frac12 \|d_t b_k\|_\dagger^2\Big) 
+ \Big(\frac12 \|y_k''\|^2 + \frac52 \|b_k'\|^2 + P_{\veps,h}[y_k,b_k] \Big) \\
& \le e_0 + \tau \frac{c_{\psi,1}^2}{2} \max\{c_\star^2,c_\dagger^2\} \big(\|y_{k-1}''\|^2 + \|b_{k-1}'\|^2\big) \\
& \le e_0 + \tau \frac{c_{\psi,1}^2}{2} \max\{c_\star^2,c_\dagger^2\} 2 e_0 
= (1+\tau C_0) e_0 \le 2 e_0,
\end{split}\]
where we assumed that $\tau C_0 \le 1$.  The next step shows that $E_{\d,\veps,h}[y^k,b^k]\le e_0$
and hence implies that the right-hand side can be replaced by~$e_0$. \\
{\em Step 2.} We again choose $w=d_t y_k$ and $r= d_t b_k$ in Steps~(1) and~(2) of 
Algorithm~\ref{alg:grad_desc} but use the Taylor formula of Lemma~\ref{la:taylor_psi}
to verify that 
\begin{multline*}
d_t \Big(\frac12 \|y_k''\|^2 + \frac52 \|b_k'\|^2 
+ \int_0^L \psi_\d(|\cA y_k''|^2,|b_k'|^2) \dv{x}
+ P_{\veps,h}[y_k,b_k] \Big) \\
+ \|d_t y_k\|_\star^2 + \|d_t b_k\|_\dagger^2 
\le \int_0^L \G(\hy_k,\hb_k,y_{k-1},b_{k-1},y_k,b_k) \dv{x},
\end{multline*}
where, using that $\|\cA r\|_{L^\infty(0,L)} \le \|r\|_{L^\infty(0,L)}$, 
\begin{multline*}
\int_0^L \G(\hy_k,\hb_k,y_{k-1},b_{k-1},y_k,b_k) \dv{x} \\
\le c_\G \tau \d^{-1} \big(\|y_k''\|_{L^\infty(0,L)}^2 + \|y_{k-1}''\|_{L^\infty(0,L)}^2 +  
\|b_k'\|_{L^\infty(0,L)}^2 + \|b_{k-1}'\|_{L^\infty(0,L)}^2+1\big) \\
\times  \big(\|d_t y_k''\|^2 + \|d_t b_k'\|^2\big). 
\end{multline*}
Using the available bounds and the inverse estimate~\eqref{eq:inv_est},
we deduce that 
\[
\int_0^L \G(\dots) \dv{x} \le c_\G \tau \d^{-1} \big(4 c_\inv h^{-1} 8 e_0+1\big)
\max\{c_\star^2,c_\dagger^2\} \big(\|d_t y_k\|_\star^2 + \|d_t b_k\|_\dagger^2\big).
\]
This implies that we have 
\[
d_t E_{\d,\veps}[y_k,b_k] + \big(1- c_{{\rm ed}} \tau \d^{-1} h^{-1}\big) 
\big(\|d_t y_k\|_\star^2 + \|d_t b_k\|_\dagger^2\big) \le 0.
\]
A summation over $k=1,2,\dots,K$ and multiplication by~$\tau$
yield the energy estimate for some~$K$, which implies the induction hypothesis. \\
{\em Step 3.} To derive the bounds that control the violation of the constraints 
we note that at the nodes we have 
\[
|b_k|^2 = |b_{k-1}|^2 + \tau^2 |d_t b_k|^2 
= \dots = |b_0|^2 + \tau^2 \sum_{r=1}^k |d_t b_k|^2.
\]
Incorporating the identity $|b_0|^2 =1$ and taking the $L^\infty$ norm implies the 
constraint violation estimate.  The same argument applies to $|y_k'|^2$. 
\end{proof}

\section{Numerical Experiments}\label{sec:num_ex}

\subsection{Realization}
With the quadrature introduced in Proposition~\ref{prop:quadrature}, 
Algorithm~\ref{alg:grad_desc}
can be realized exactly and only requires solving linear systems of 
equations. These define certain linear saddle-point problems with 
unique solutions. We always use the inner products related to the 
norms 
\[
\|w\|_\star^2 = \|w\|^2 + \|w''\|^2, \quad \|r\|_\dagger^2 = \|r\|^2 + \|r'\|^2,
\]
and the relations
\[
\tau = h/10, \quad \veps_1 = h, \quad \veps_2 = h^{1/2}, \quad \d = h^{1/2}.
\]
In view of Proposition~\ref{prop:ener_decay} this choice for~$\tau$ may
be too optimistic in general. However, we always observed energy monotonicity
in our experiments which may be related to an additional regularity property.
We confirm our theoretical findings by showing that our numerical scheme 
reliably and efficiently detects 
stationary configurations of low energy via discrete evolutions from given 
initial states. In what follows we visualize the ribbon by introducing
an artificial positive width. In addition to the discretized dimensionally reduced 
eleastic energy $E_{\d,\veps,h}$ and the penalty term $P_{\veps,h}$
we also investigate the behavior of the bending and twist energies given by
\[
E_{{\rm bend}}[y] = \frac12 \int_0^L |y''|^2 \dv{x}, \quad
E_{{\rm twist}}[b] = \frac12 \int_0^L |b'|^2 \dv{x}.
\]
We always consider clamped boundary conditions for the deformation~$y$ and
Dirichlet boundary conditions for the director~$b$ which are specified
via the initial states, i.e., we impose 
\[
y(x) = y_0(x), \quad y'(x) = y_0'(x), \quad b(x) = b_0(x).
\]
at the endpoints $x\in \{0,L\}$. To satisfy the condition $y_0'\cdot b_0 = 0$
the vector field $b_0$ is obtained by rotating a non-tangential vector field~$n_0$
around the tangent vector $y_0'$. The condition $y_0'\cdot b_0' =0$ is not
taken into account in the construction of $b_0$ but will be approximated via
the involved penalization.
The discrete initial states are defined
via nodal interpolation of the continuous initial states, i.e., 
\[
y_h^0 = \cI_h^{3,1} y_0, \quad b_h^0 = \cI_h^{1,0} b_0.
\]
All computations are carried out on triangulations that are given by
uniform partitions of the intervals $(0,L)$ into $N$ subintervals of 
length $h=L/N$. We occasionally refer to the quantities $|y_h''|$ and
$|b_h'|$ as curvature and torsion of a ribbon. We always ran the discrete
gradient flow with the fixed time horizon $T=10$. 

\subsection{M\"obius ribbon}
We consider boundary conditions that lead to the formation of a M\"obius 
strip of small, vanishing width. The following example defines a twisted
frame for the unit circle which will deform into a stationary configuration
for the reduced elastic energy. 

\begin{example}[M\"obius ribbon]\label{ex:moebius}
Set $L=2 \pi$ and define for $x\in (0,L)$ 
\[
y_0(x) = \big( \cos(x),\sin(x), 0\big)
\]
and with $\a= 3/2$ 
\[
b_0 = \cos(\a \cdot) (y_0' \times n_0) \times y_0' 
+ \sin(\a \cdot) y_0' \times n_0 ,
\]
with $n_0 = [y_0']_2^\perp$ being the rotation of the first two components
of $y_0'$ by $\pi/2$, i.e., $[a]_2^\perp = (-a_2,a_1,0)$ for $a\in \R^3$. 
\end{example}

Figure~\ref{fig:band_snaps_exp_1} shows snapshots of the discrete evolution
defined by Algorithm~\ref{alg:grad_desc} in Example~\ref{ex:moebius} for a partitioning
of $(0,L)$ into $N=320$ intervals. We observe an immediate change in shape
of the ribbon towards a M\"obius strip. The initially constant curvature
becomes nonconstant and appears to vanish in neighborhoods of three points 
in the nearly stationary configuration. This effect can also be seen in 
Figure~\ref{fig:band_finals_exp_1} where the stationary state is visualized
from two other perspectives. In points where curvature vanishes 
the original Sadowsky functional is singular and the correction from~\cite{FHMP16}
becomes particularly relevant. The pure bending energy defined by the deformation 
of the centerline
increases during the discrete gradient flow evolution, while the twist energy
and the dimensionally reduced energy decay monotonically and quickly become
nearly stationary, cf.~Figure~\ref{fig:ener_exp_1}. The discrete director
fields $b_h^0$ and $b_h^k$ with $k=5092$ are displayed at selected deformed
nodes in Figure~\ref{fig:directors_exp_1}. An experimental convergence
behavior of the stationary energies 
for discretizations with $h=L/N$, $N=80, 160, 320, 640$ is provided
in Table~\ref{tab:conv_eners_exp_1}. We observe that the energies increase
with finer resolutions and their differences become smaller. These results also
indicate that the scheme does not tend to get stuck at local minima. 

\begin{table}
\begin{tabular}{|l|c|c|c|c|} \hline
$N\sim h^{-1}$ & 80 & 160 & 320 & 640 \\\hline
$E^{{\rm quad}}_{\d,\veps,h}[y_h,b_h]$ & 14.9255 & 32.0886 & 35.5842 & 37.6865 \\ \hline 
\end{tabular}
\vspace*{2mm}
\caption{\label{tab:conv_eners_exp_1} Stationary energy values in 
Example~\ref{ex:moebius} for different mesh-sizes $h=L/N$.}
\end{table}

\begin{figure}[p]
\includegraphics[width=6.5cm]{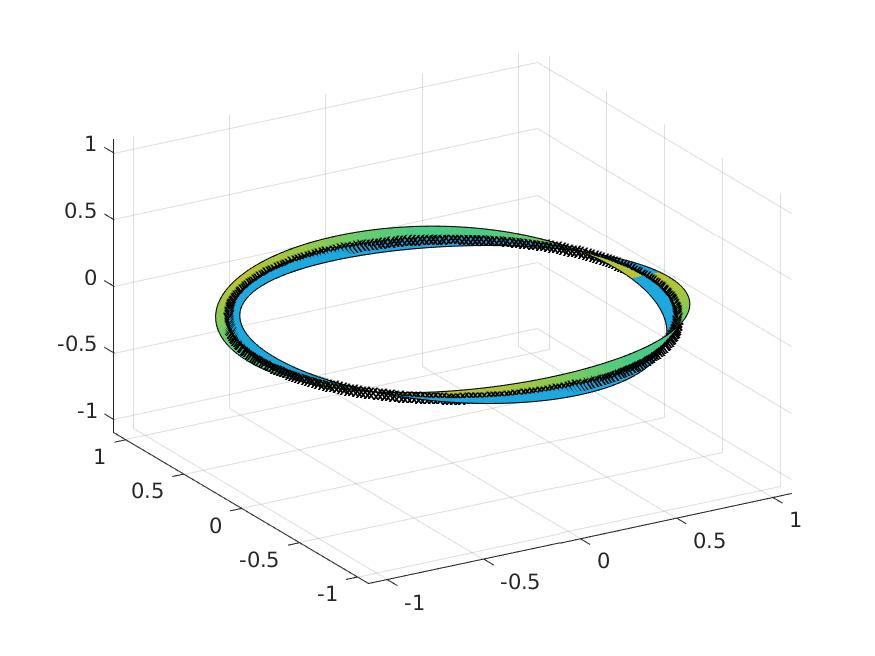} \hspace*{-8mm}
\includegraphics[width=6.5cm]{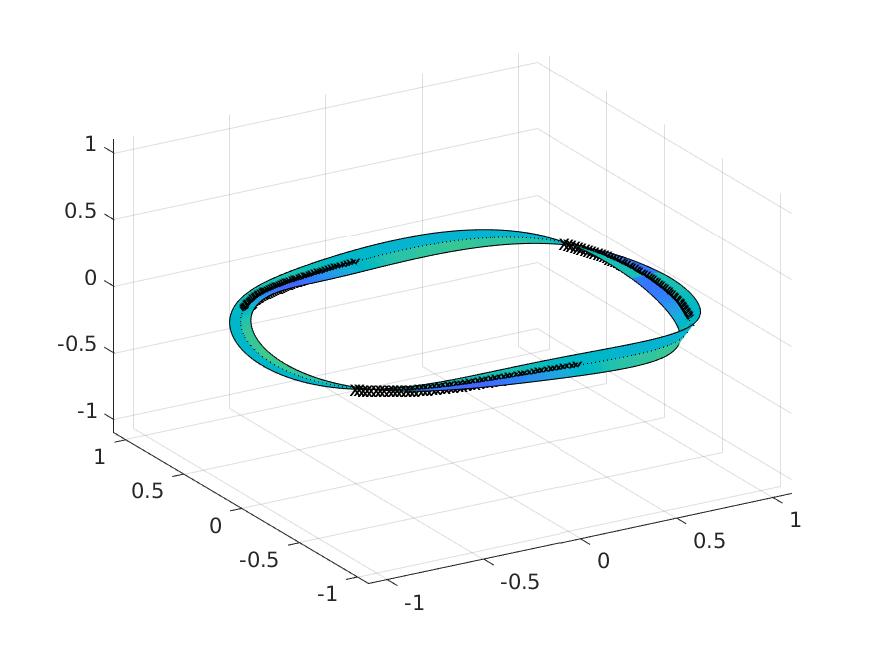} \\[-3mm]
\includegraphics[width=6.5cm]{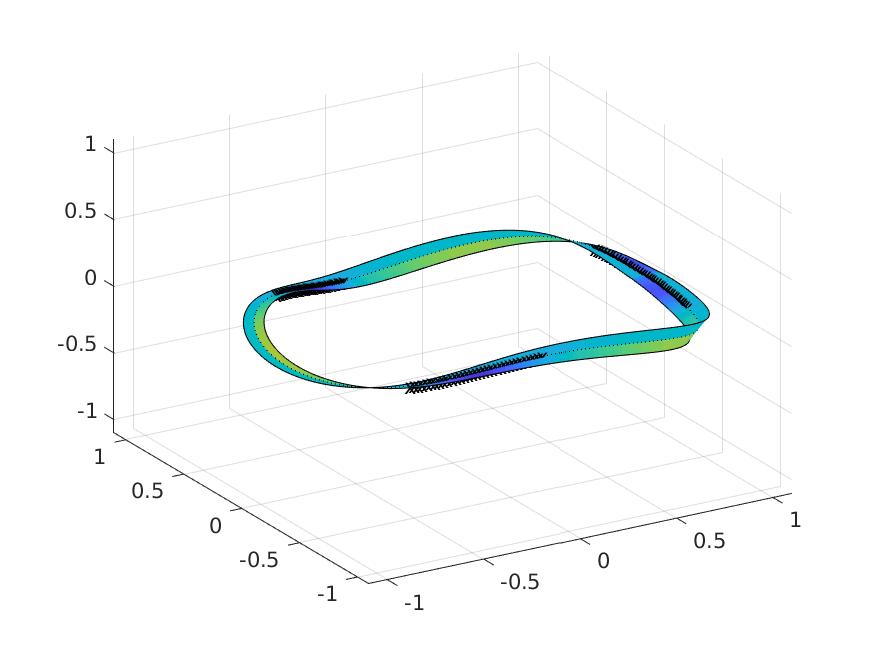}  \hspace*{-8mm}
\includegraphics[width=6.5cm]{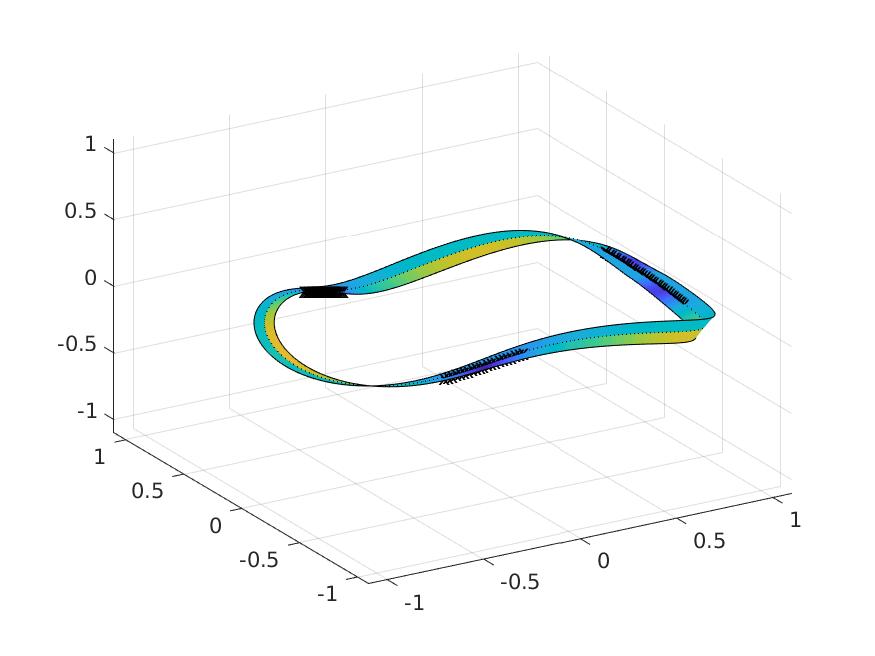} \\[-3mm]
\includegraphics[width=6.5cm]{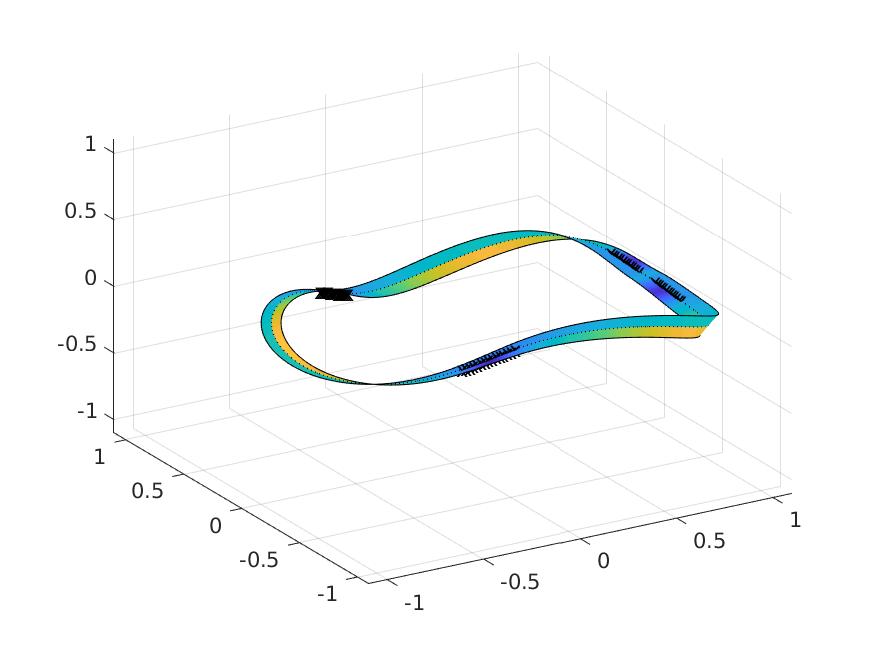} \hspace*{-8mm} 
\includegraphics[width=6.5cm]{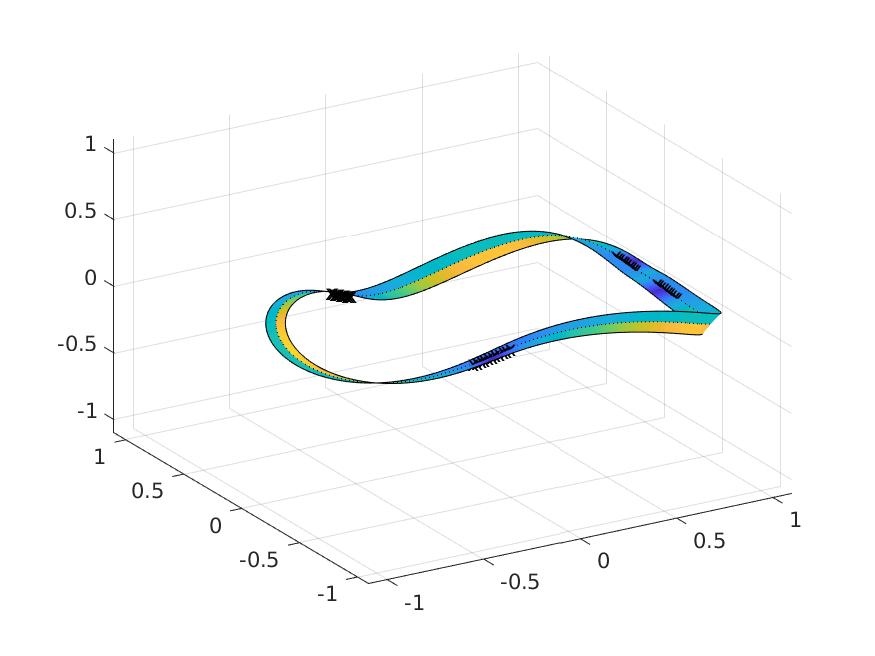} \\[-3mm]
\includegraphics[width=6.5cm]{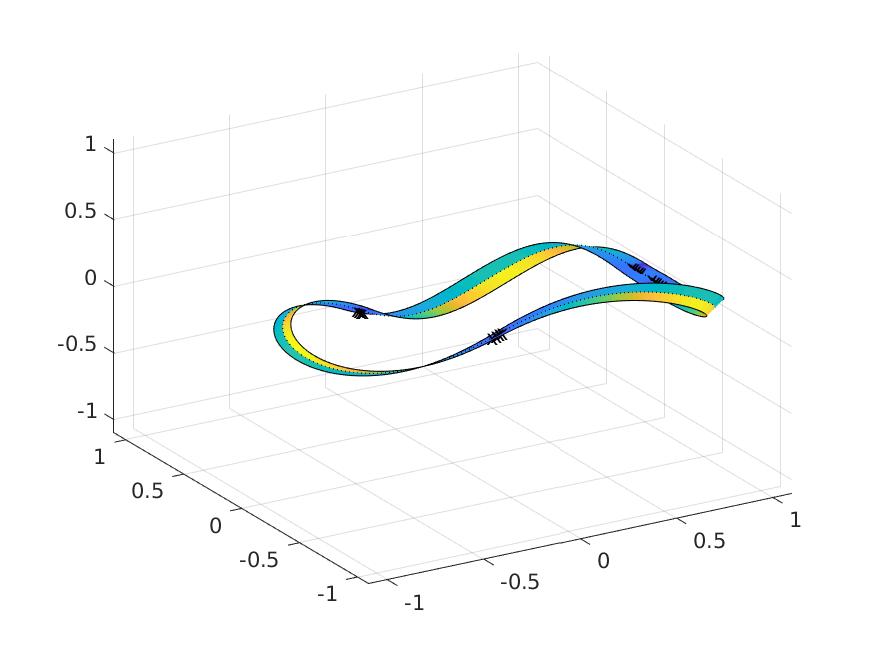} \hspace*{-8mm} 
\includegraphics[width=6.5cm]{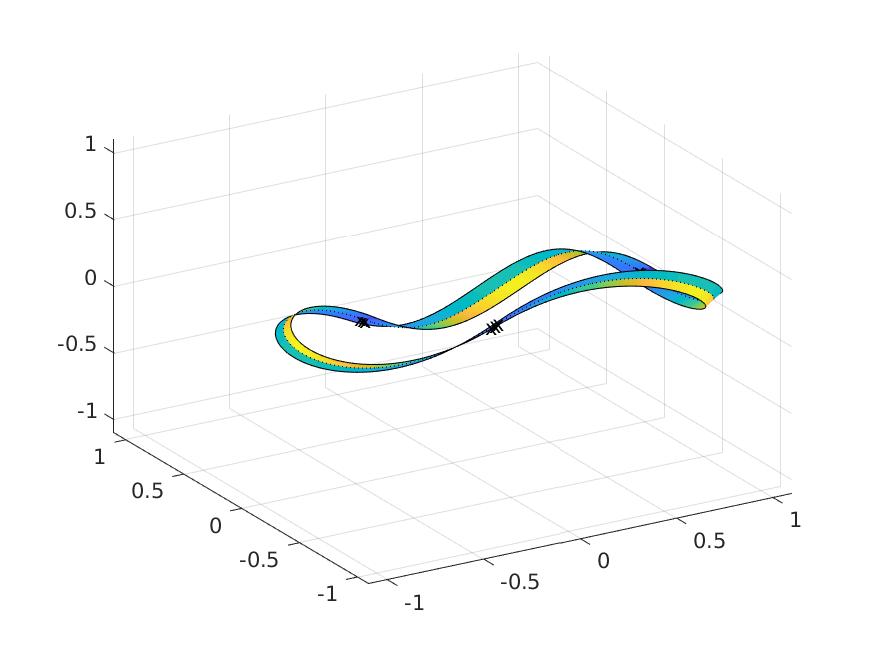}
\caption{\label{fig:band_snaps_exp_1} Snapshots of the discrete gradient flow
in Example~\ref{ex:moebius} after $k=0, 102, 204, 306, 408, 510, 2448, 5092$ iterations
leading to a M\"obius strip, colored by curvature and torsion; 
crosses indicate where torsion dominates curvature.}
\end{figure}

\begin{figure}[p]
\includegraphics[width=5.6cm]{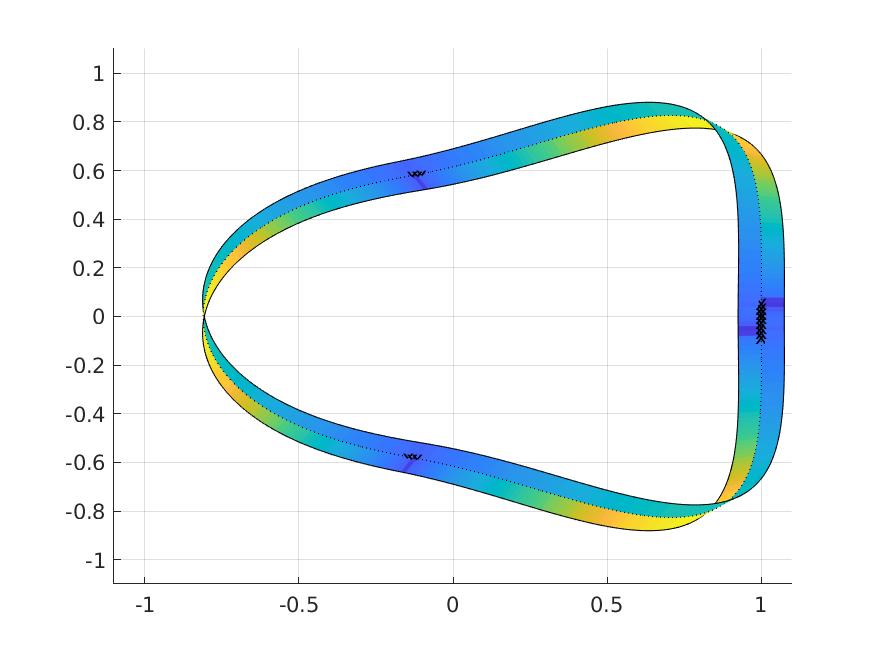}  \hspace*{-3mm}
\includegraphics[width=5.6cm]{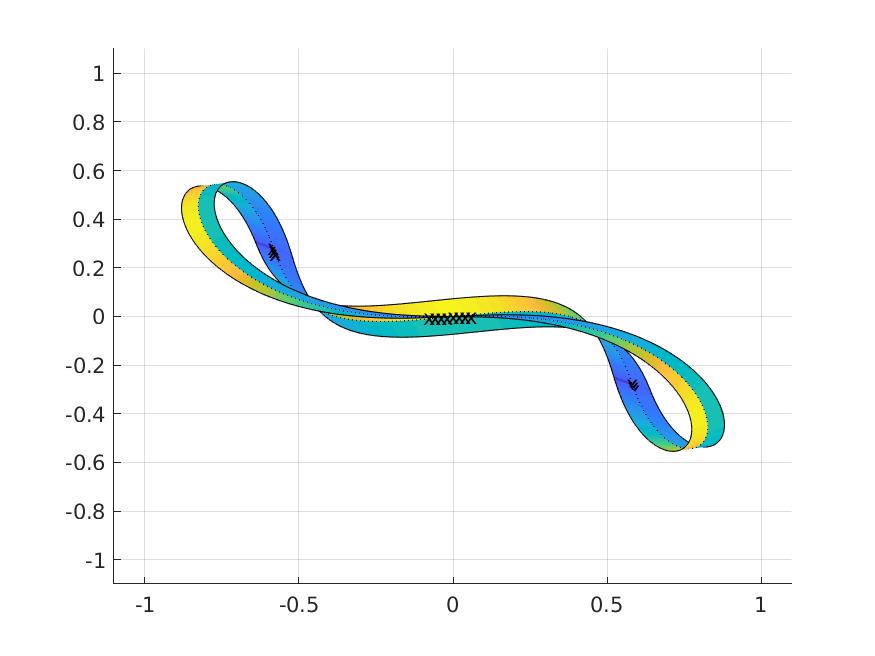} \\
\caption{\label{fig:band_finals_exp_1} Nearly stationary configuration in 
Example~\ref{ex:moebius} from different perspectives colored by curvature and
torsion; crosses indicate where torsion dominates curvature.}
\end{figure}

\begin{figure}
\includegraphics[width=5.8cm]{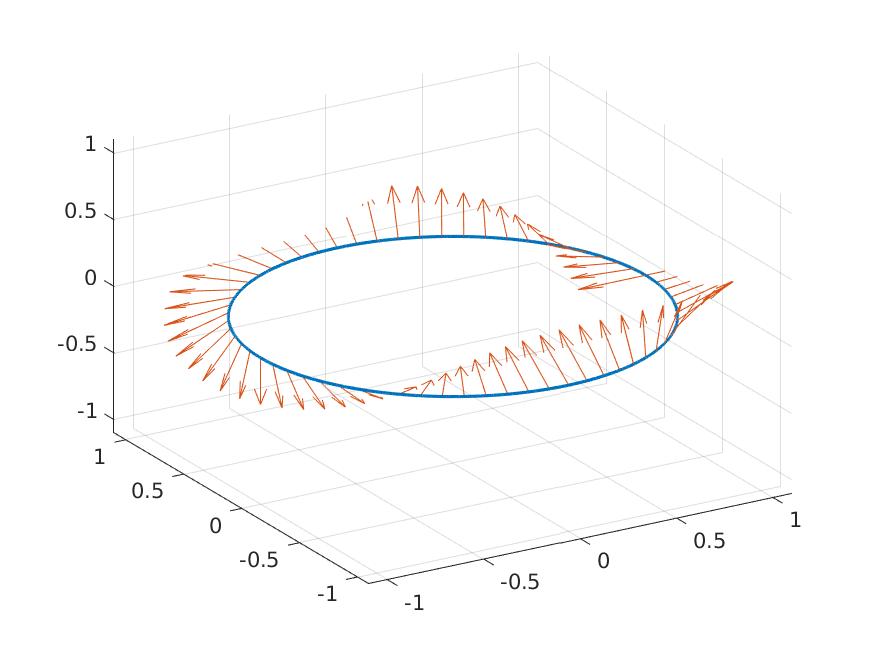} 
\includegraphics[width=5.8cm]{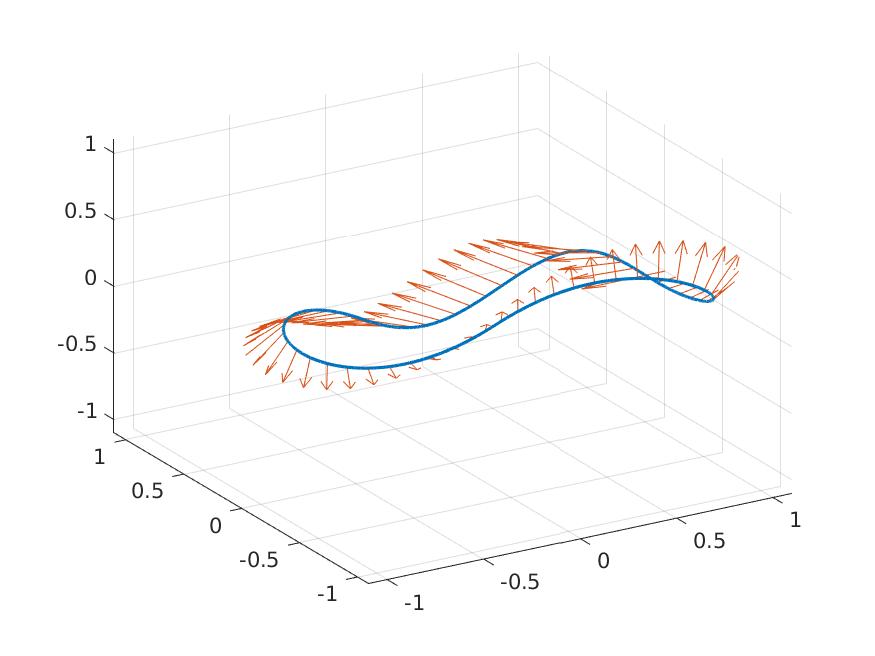} \\
\caption{\label{fig:directors_exp_1} Initial and nearly stationary director
fields on the deformed centerline 
in Example~\ref{ex:moebius}; the vectors at every fourth node are displayed.}
\end{figure}

\begin{figure}
\includegraphics[width=7.7cm]{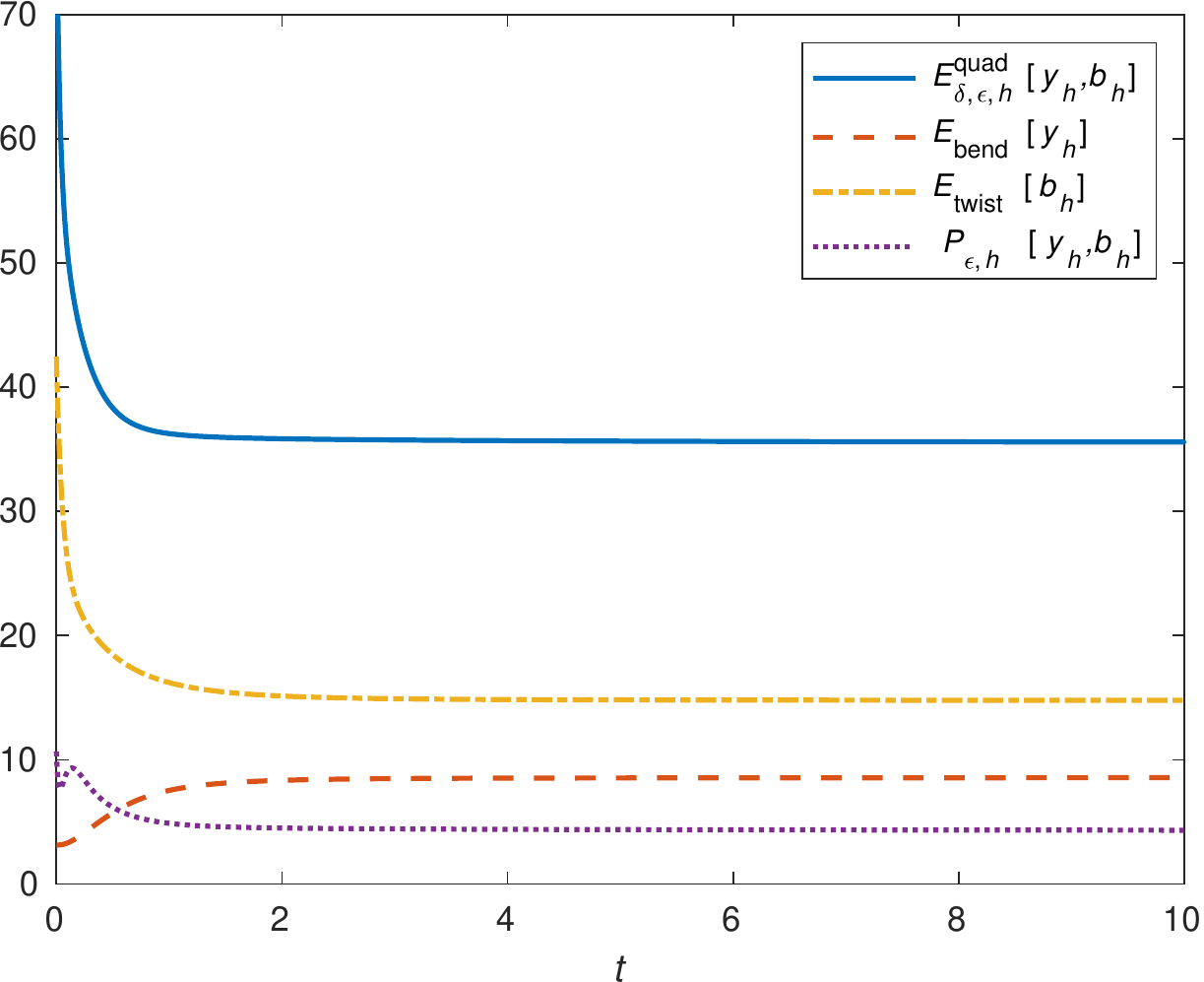} 
\caption{\label{fig:ener_exp_1} Energy decay in Example~\ref{ex:moebius}
accompanied by an increase of bending energy; twist energy and penalty functional
decay monotonically.}
\end{figure}

\subsection{Twisted helix}
A helix is a canonical example of a twisted band. We consider here the
evolution from a helical configuration where the torsion is inconsistent
with the winding of the helix.

\begin{example}[Twisted helical ribbon]\label{ex:helix}
Let $L = 2\pi$ and define for $x\in (0,L)$
\[
y_0(x) = \big(c x, d  \cos(\b x), d \sin(\b x) \big),
\]
where $c=0.95$ and $d= (1-c^2)^{1/2}/\b$ with $\b=2$, and
with $\a=1$ let
\[
b_0 = \Pi_{S_1}\big[\cos(\a \cdot) (y_0' \times n_0) \times y_0' 
+ \sin(\a \cdot ) y_0' \times n_0\big] ,
\]
where $n_0 = [y_0']_2^\perp$, cf.~Example~\ref{ex:moebius}, and
$\Pi_{S_1}:\R^3\setminus \{0\}\to \R^3$ denotes the projection 
onto the unit sphere. 
\end{example}

The application of Algorithm~\ref{alg:grad_desc} to the initial
data defined by Example~\ref{ex:helix} leads to the snapshots 
displayed in Figure~\ref{fig:band_snaps_exp_2}. In contrast
to Example~\ref{ex:moebius} we observe in Example~\ref{ex:helix}
that in the nearly stationary configuration curvature is dominated
by torsion along the entire centerline, cf.~Figure~\ref{fig:band_finals_exp_2}.
Interestingly, this is also the case for the inital configuration but
changes during the evolution. The discrete, dimensionally reduced energy,
the twist energy, and the penalty value decrease montonically during
the evolution while the bending energy shows a moderate increase as 
can be seen in the plot of Figure~\ref{fig:ener_exp_2}. The initial
and nearly stationary director fields are displayed in 
Figure~\ref{fig:directors_exp_2}. The stationary energies for a 
sequence of refined partitions are provided in Table~\ref{tab:conv_eners_exp_2}.
In comparison with Example~\ref{ex:moebius} we observe here a non-monotone
behavior and smaller differences. 

\begin{table}
\begin{tabular}{|l|c|c|c|c|} \hline
$N\sim h^{-1}$ & 80 & 160 & 320 & 640 \\\hline
$E^{{\rm quad}}_{\d,\veps,h}[y_h,b_h]$ & 27.7554 & 26.5432 & 26.3554 & 26.4050\\ \hline 
\end{tabular}
\vspace*{2mm}
\caption{\label{tab:conv_eners_exp_2} Stationary energy values in 
Example~\ref{ex:helix} for different mesh-sizes $h=L/N$.}
\end{table}

\begin{figure}[p]
\includegraphics[width=6.5cm]{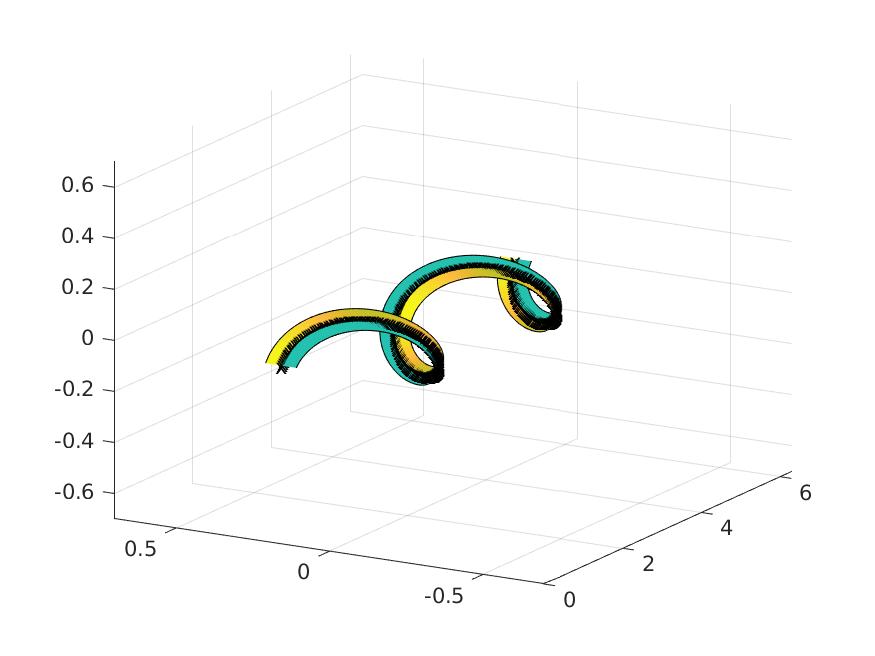} \hspace*{-8mm}
\includegraphics[width=6.5cm]{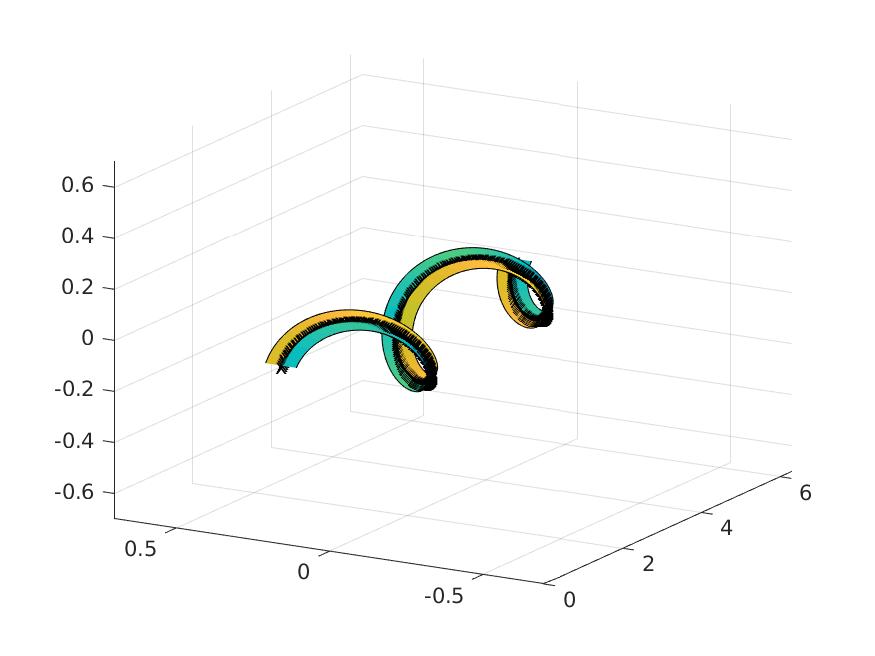} \\[-2mm]
\includegraphics[width=6.5cm]{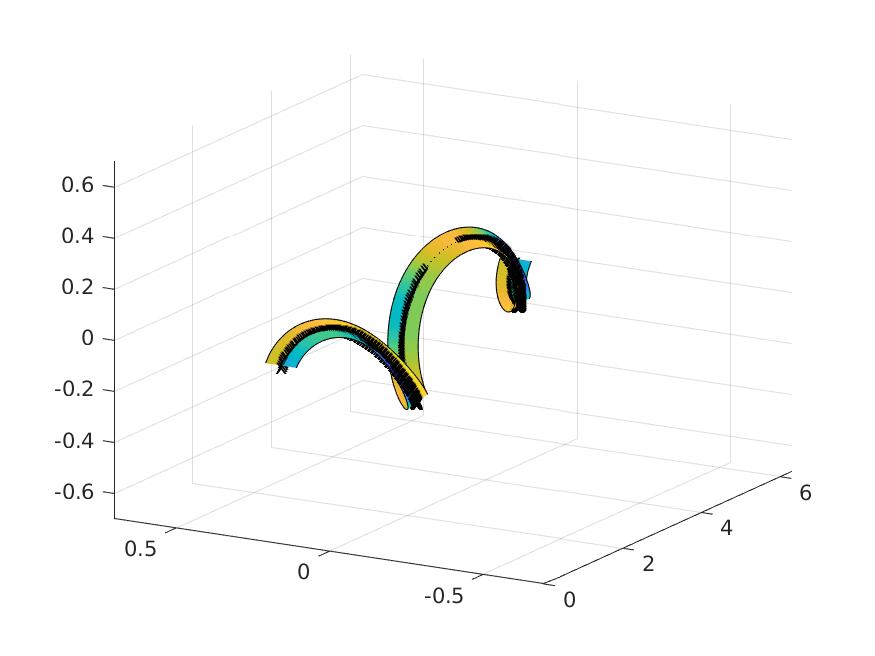}  \hspace*{-8mm}
\includegraphics[width=6.5cm]{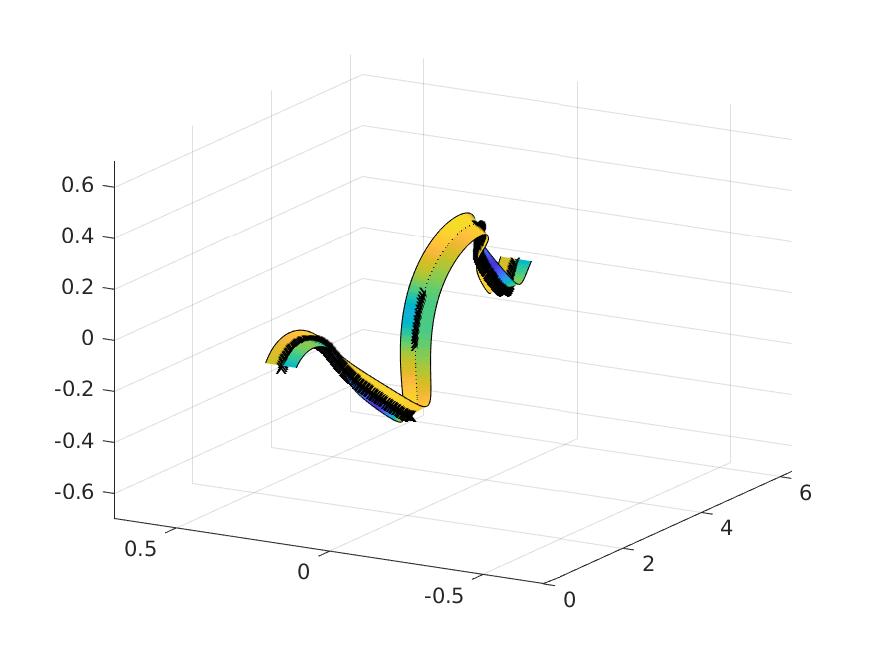} \\[-2mm]
\includegraphics[width=6.5cm]{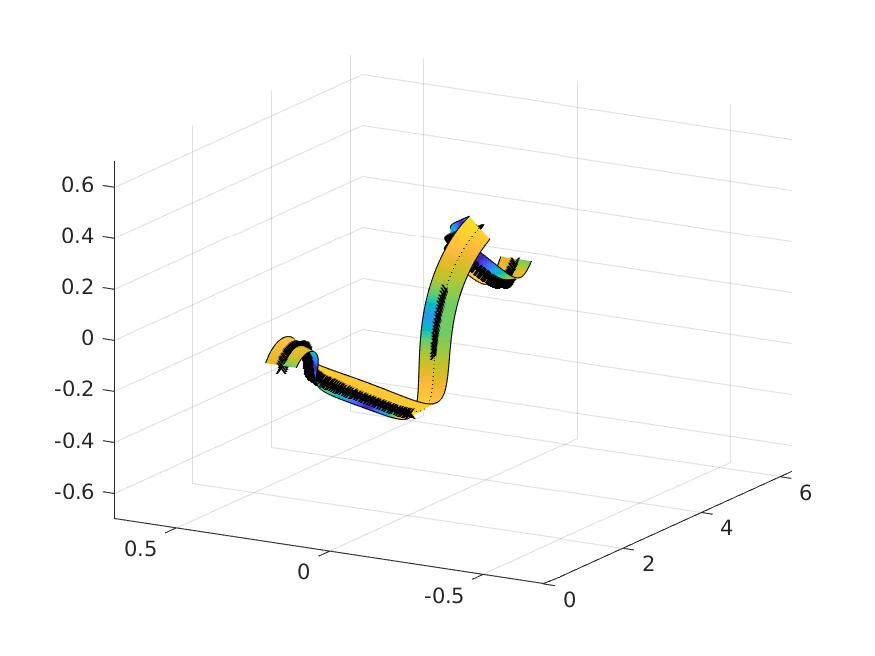} \hspace*{-8mm} 
\includegraphics[width=6.5cm]{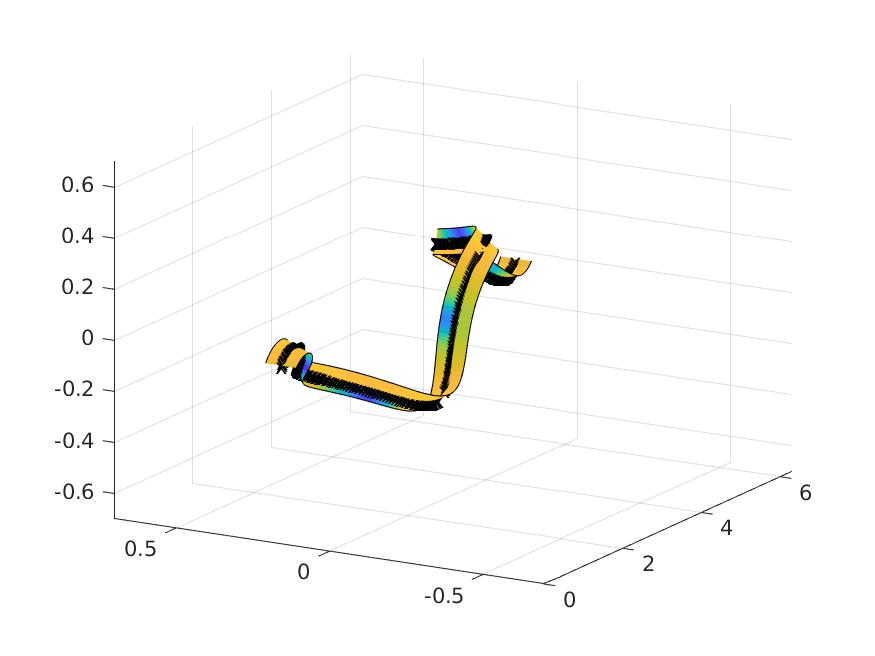} \\[-2mm]
\includegraphics[width=6.5cm]{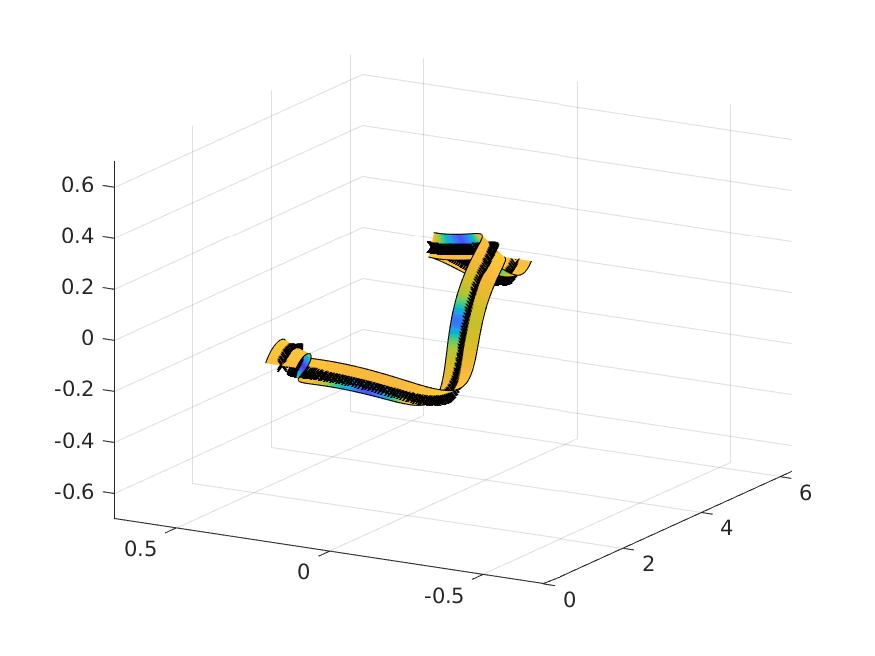} \hspace*{-8mm} 
\includegraphics[width=6.5cm]{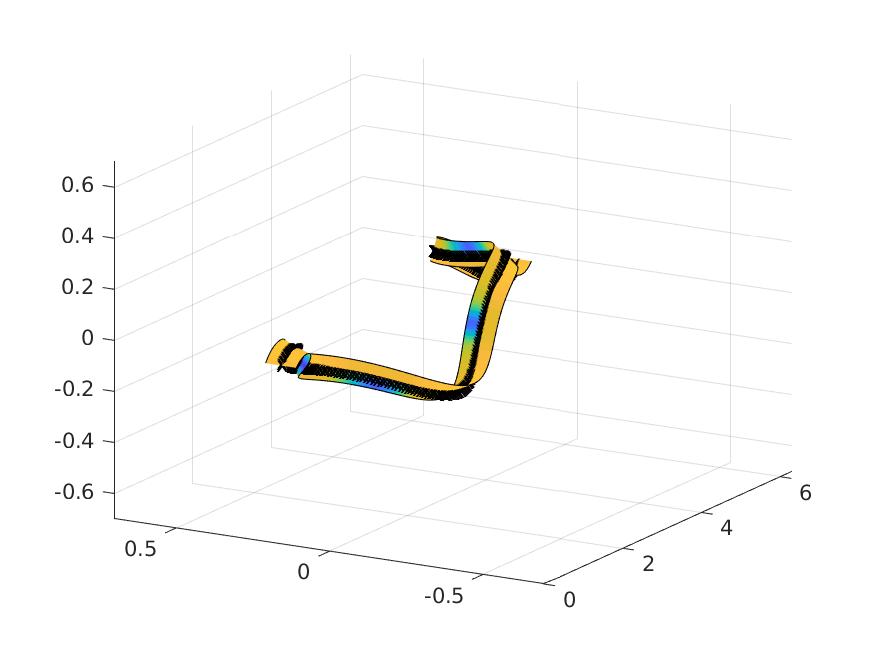}
\caption{\label{fig:band_snaps_exp_2} Snapshots of the discrete gradient flow
in Example~\ref{ex:helix} after $k=0, 408, 816, 1224, 1632, 2040, 2448, 5092$ iterations
starting from a helix, colored by curvature and torsion; 
crosses indicate where torsion dominates curvature.}
\end{figure}

\begin{figure}[p]
\includegraphics[width=5.6cm]{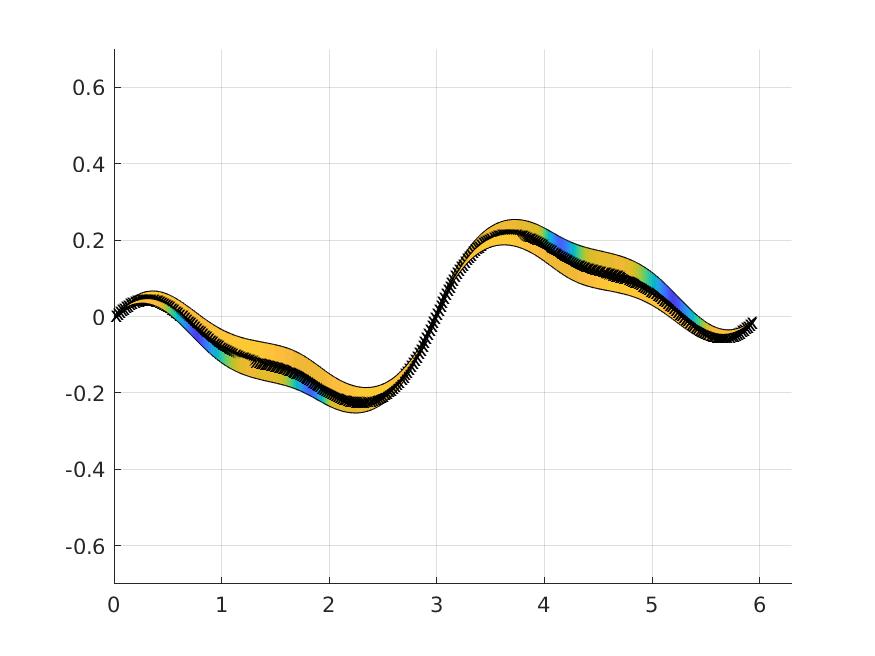}  \hspace*{-3mm}
\includegraphics[width=5.6cm]{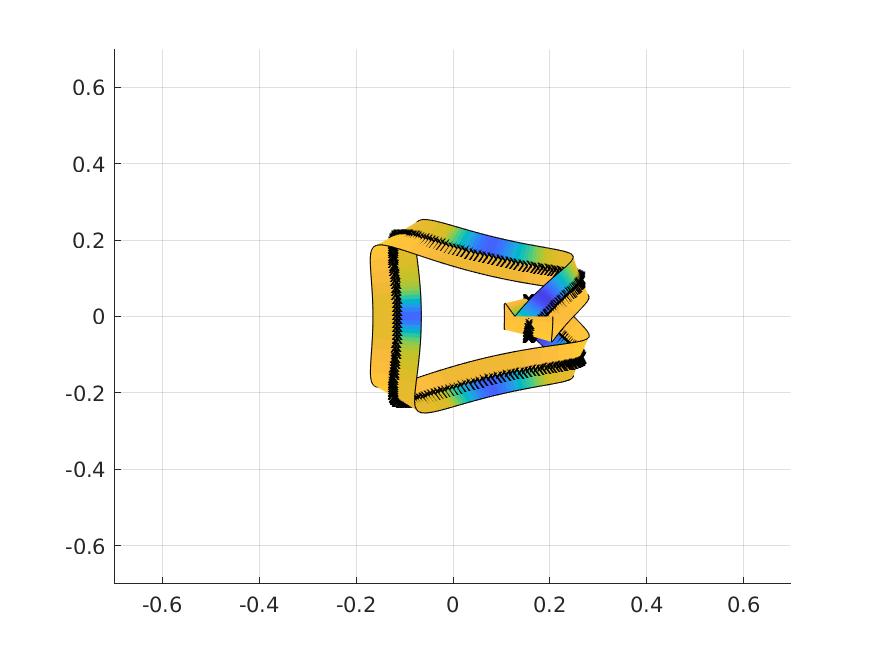} \\
\caption{\label{fig:band_finals_exp_2} Nearly stationary configuration in 
Example~\ref{ex:helix} from different perspectives colored by curvature and
torsion; crosses indicate where torsion dominates curvature.}
\end{figure}

\begin{figure}
\includegraphics[width=5.8cm]{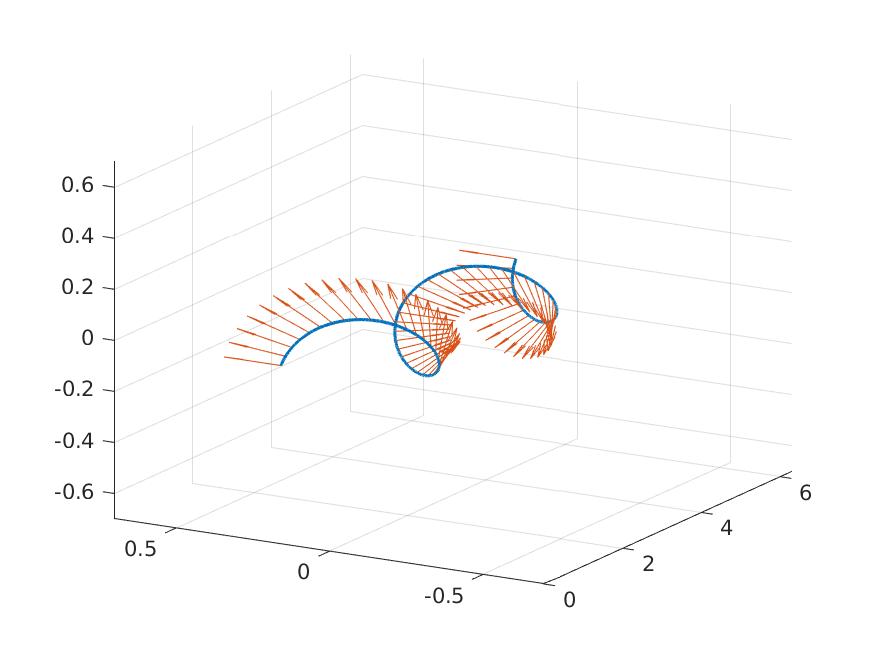} 
\includegraphics[width=5.8cm]{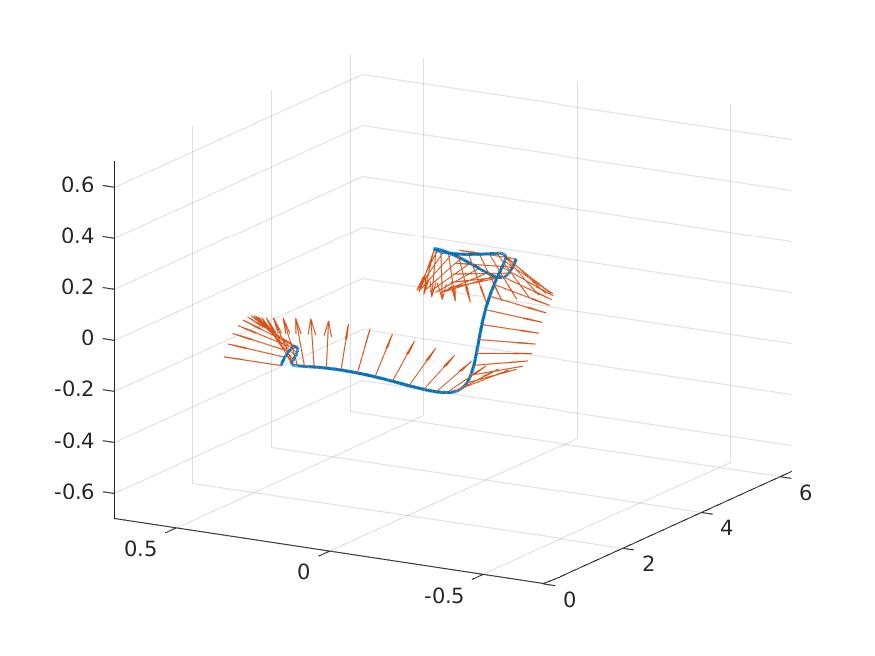} \\
\caption{\label{fig:directors_exp_2} Initial and nearly stationary director
fields on the deformed centerline 
in Example~\ref{ex:helix}; the vectors at every fourth node
are displayed.}
\end{figure}

\begin{figure}
\includegraphics[width=7.7cm]{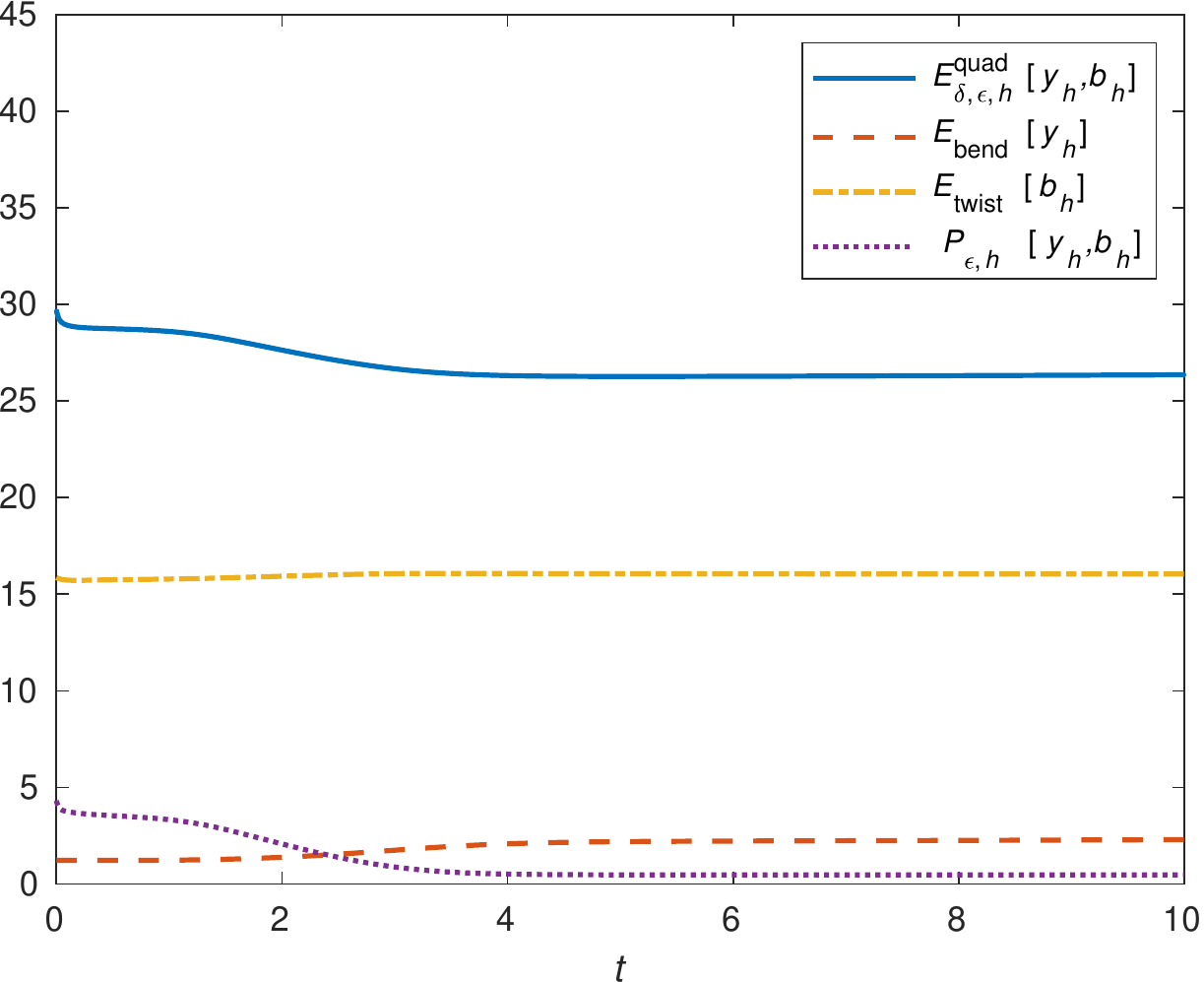} 
\caption{\label{fig:ener_exp_2} Energy decay in Example~\ref{ex:helix}
accompanied by a small increase of bending energy; twist energy and penalty functional
decay monotonically.}
\end{figure}


\medskip
\noindent
{\em Acknowledgments.} The author acknowledges stimulating discussions
with Peter Hornung. The author would like to thank the Isaac Newton 
Institute for Mathematical Sciences for support and hospitality 
during the programme {\em Geometry, compatibility and structure preservation 
in computational differential equations} when work on this paper 
was undertaken. This work was supported by EPSRC grant number EP/R014604/1.
The author also acknowledges support by the German Research Foundation (DFG) 
via the research unit FOR 3013 {\em Vector- and tensor-valued surface PDEs}.

\bibliographystyle{abbrv}
\bibliography{refs}

\begin{thebibliography}{10}

\bibitem{AudSef15}
B.~Audoly and K.~A. Seffen.
\newblock Buckling of naturally curved elastic strips: the ribbon model makes a
  difference.
\newblock {\em J. Elasticity}, 119(1-2):293--320, 2015.

\bibitem{Bart13}
S.~Bartels.
\newblock Approximation of large bending isometries with discrete {K}irchhoff
  triangles.
\newblock {\em SIAM J. Numer. Anal.}, 51(1):516--525, 2013.

\bibitem{Bart15-book}
S.~Bartels.
\newblock {\em Numerical methods for nonlinear partial differential equations},
  volume~47 of {\em Springer Series in Computational Mathematics}.
\newblock Springer, Cham, 2015.

\bibitem{BarHor15}
S.~Bartels and P.~Hornung.
\newblock Bending paper and the {M}\"{o}bius strip.
\newblock {\em J. Elasticity}, 119(1-2):113--136, 2015.

\bibitem{BarRei19-in-prep-pre}
S.~Bartels and P.~Reiter.
\newblock Numerical solution of a bending-torsion model for elastic rods.
\newblock arxiv:1911.07024, 2019.

\bibitem{FHMP16}
L.~Freddi, P.~Hornung, M.~G. Mora, and R.~Paroni.
\newblock A corrected {S}adowsky functional for inextensible elastic ribbons.
\newblock {\em J. Elasticity}, 123(2):125--136, 2016.

\bibitem{FHMP-in-prep-pre}
L.~Freddi, P.~Hornung, M.~G. Mora, and R.~Paroni.
\newblock {I}n preparation, 2019.

\bibitem{FrJaMu02}
G.~Friesecke, R.~D. James, and S.~M\"{u}ller.
\newblock A theorem on geometric rigidity and the derivation of nonlinear plate
  theory from three-dimensional elasticity.
\newblock {\em Comm. Pure Appl. Math.}, 55(11):1461--1506, 2002.

\bibitem{KirFri15}
N.~O. Kirby and E.~Fried.
\newblock Gamma-limit of a model for the elastic energy of an inextensible
  ribbon.
\newblock {\em J. Elasticity}, 119(1-2):35--47, 2015.

\bibitem{MorMul03}
M.~G. Mora and S.~M\"{u}ller.
\newblock Derivation of the nonlinear bending-torsion theory for inextensible
  rods by {$\Gamma$}-convergence.
\newblock {\em Calc. Var. Partial Differential Equations}, 18(3):287--305,
  2003.

\bibitem{Sado30a}
M.~Sadowsky.
\newblock Ein elementarer {B}eweis f\"{u}r die {E}xistenz eines abwickelbaren
  {M}\"{o}biuschen {B}andes und die {Z}ur\"{u}ckf\"{u}hrung des geometrischen
  {P}roblems auf ein {V}ariationsproblem.
\newblock {\em Sitzungsber. Preuss. Akad. Wiss.}, Mitteilung vom 26
  Juni,:412--415, 1930.

\bibitem{Sado30b}
M.~Sadowsky.
\newblock Theorie der elastisch biegsamen undehnbaren {B}\"{a}nder mit
  {A}nwendungen auf das {M}\"{o}biussche {B}and.
\newblock {\em Verh. 3. Int. Kongr. Tech. Mech.}, 2:444--451, 1930.

\bibitem{StaHei07}
E.~L. Starostin and G.~H.~M. van~der Heijden.
\newblock The shape of a {M}\"{o}bius strip.
\newblock {\em Nature Materials}, 6:563--567, 2007.

\bibitem{SZTDI12}
G.~Stoychev, S.~Zakharchenko, S.~Turcaud, J.~W.~C. Dunlop, and L.~Ionov.
\newblock Shape-programmed folding of stimuli-responsive polymer bilayers.
\newblock {\em ACS Nano}, 6(5):3925--3934, 2012.
\newblock PMID: 22530752.

\bibitem{Wund62}
W.~Wunderlich.
\newblock \"{U}ber ein abwickelbares {M}\"{o}biusband.
\newblock {\em Monatsh. Math.}, 66:276--289, 1962.

\end{thebibliography}

\end{document}